\tikzstyle{process} = [rectangle, rounded corners, minimum width=2cm, minimum height=1cm,text centered, draw=black]
\tikzstyle{process_small} = [rectangle, rounded corners, minimum width=0.6cm, minimum height=0.7cm,text centered, draw=black]
\tikzstyle{decision} = [diamond, minimum width=1cm, minimum height=1cm, text centered, draw=black]
\tikzstyle{arrow} = [thick,->,>=stealth]
\newcommand{\norm}[1]{\left\lVert#1\right\rVert}
\crefname{hypothesis}{Hypothesis}{Hypotheses}
\title{Weighted Proper Orthogonal Decomposition for High-Dimensional Optimization\thanks{{\bfseries Funding:} S. van Schie and J. Hwang were supported by NASA under award no.~80NSSC21M0070. B. Kramer was supported by the Air Force Office of Scientific Research (PM Fahroo) under award no. FA9550-24-1-0237. The authors would like to acknowledge the Multidisciplinary Science and Technology Center of the Aerospace Systems Directorate, Air Force Research Laboratory for funding and supporting this effort.}}
\author{Sebastiaan P. C. van Schie\thanks{Department of Mechanical and Aerospace Engineering, UC San Diego, La Jolla, CA 
	(\email{svanschie@ucsd.edu}, \email{bmkramer@ucsd.edu}, \email{jhwang@ucsd.edu}).}
\and Boris Kramer\footnotemark[2]
\and John T. Hwang\footnotemark[2]}
\begin{document}

\maketitle

\begin{abstract}
	While proper orthogonal decomposition (POD) is widely used for model reduction, its standard form does not take into account any parametric model structure. Extensions to POD have been proposed to address this, but these either require large amounts of solution data, lack online adaptivity, or have limited approximation accuracy. We circumvent these limitations by instead assigning weights to the snapshot matrix columns, and updating these whenever the model is evaluated at a new point in the parameter space. We derive an \textit{a posteriori} error bound that depends on these snapshot weights, show how these weights can be chosen to tighten the error bound, and present an algorithm to compute the corresponding reduced basis efficiently. We show how this weighted POD approach can be used to naturally generalize the calculation of reduced basis derivatives to situations with multidimensional parameter spaces and snapshots at multiple locations in the parameter space. Lastly, we cover how these approaches can be implemented within an optimization algorithm, without the need for an offline training phase. The proposed weighted POD methods with and without reduced basis derivatives are applied to a gradient-based shell thickness optimization problem with 105 design parameters and a time-dependent partial differential equation. The numerical solutions obtained for this problem attain errors that are several orders of magnitude smaller when using weighted POD than those computed with regular POD and Grassmann manifold interpolation, while having comparable wall times per query and requiring fewer high-dimensional model snapshots to reach an optimal solution.	
\end{abstract}

\begin{keywords}
	Proper orthogonal decomposition, parametric model reduction, PDE-constrained optimization, a posteriori error bound
\end{keywords}


\begin{MSCcodes}
	15A18, 49M41, 65F55, 65M15, 65M60
\end{MSCcodes}

\section{Introduction}
Proper orthogonal decomposition (POD) is widely used as a model reduction technique for a variety of applications, such as optimal control \cite{bib:Ravindran2000}, discovery and analysis of models for complex physical systems \cite{bib:Berkooz1993,bib:Ly2001}, design optimization \cite{bib:LeGresley2000}, uncertainty quantification \cite{bib:Heinkenschloss2020,bib:Chen2017}, inverse problems \cite{bib:Bialecki2003}, and parameter estimation \cite{bib:Schmidt2013}, to name a few. In part, this popularity is due to its optimal approximation property and ease of computation: POD leverages the singular value decomposition (SVD), which gives reduced-rank approximations to a given data matrix that are optimal in the matrix $2$-norm and Frobenius norm \cite[Theorem 3.6]{bib:Antoulas2005}.

Standard POD does not take into account any parametric model structure. Given a data matrix that contains solution snapshots at multiple points in the parameter space and possibly corresponding to multiple time steps, POD finds the best approximation in the $L_2$ sense to \textit{all} snapshots. This results in a single global reduced basis whose predictive performance is highly dependent on how correlated the solution snapshots are. However, in optimization we only require accurate local reduced bases, rather than the best global reduced basis. Moreover, gathering enough solution snapshots to obtain an accurate global reduced basis for the entire parameter space is prohibitively expensive. \\

Many extensions to improve the accuracy of POD for parametric problems have been proposed. These can be classified as global, local or adaptive POD approaches \cite{bib:Lu2019, bib:Peng2016}. Global methods attempt to construct a single reduced basis that works for the entire parameter space. Such bases can be constructed in various ways. Examples include concatenating local reduced bases corresponding to multiple points in the parameter space, and using adaptive sampling to gather more snapshots in regions of interest of the parameter space \cite{bib:Benner2015}. Local POD approaches divide the parameter, physical, or time domain into multiple subdomains and compute a separate reduced basis for each subdomain. Clustering approaches are a prime example of this \cite{bib:Amsallem2012, bib:Peng2016, bib:Sahyoun2013}: Solution data is divided into clusters, and a POD basis is constructed for each cluster. Each basis is considered valid in its respective cluster. Adaptive POD approaches construct a new reduced basis for each point that is queried in the parameter space. Among these are interpolation approaches, which compute an adaptive POD basis by combining information on the spaces spanned by multiple local bases. One interpolation approach, Grassmann manifold interpolation \cite{bib:Amsallem2008, bib:Goutaudier2023}, formulates an interpolation problem in the tangent space at a reference point on a Grassmann manifold. The resulting POD basis for a new point in the parameter space retains its orthonormality, which is not the case when interpolating reduced bases directly in the parameter space.

Snapshot weighting has been used to construct local or adaptive POD bases, in various ways. First, local POD bases have been constructed on subdomains with a clustering-type approach, with snapshot weights based on the proximity of snapshots to the centroid of each subdomain \cite{bib:Peng2016}. Second, goal-oriented POD bases have been constructed for uncertainty quantification, with snapshots weighted according to a quadrature rule based on the distribution of the parameter inputs \cite{bib:Venturi2019}. Third, weights have been used to deal with snapshots that are scattered irregularly throughout the parameter space \cite{bib:Bistrian2016}. Fourth, when including state derivatives in the snapshot matrix, snapshot weights have been chosen to simultaneously mimic Taylor polynomials and to give less importance to more distant snapshots and derivatives \cite{bib:Carlberg2008,bib:Carlberg2011, bib:Schmidt2013}. And lastly, cubic spline interpolation has been used for inverse design problems to directly predict local steady-state solutions at unseen points in the parameter space \cite{bib:BuiThanh2003}. 

However, there are several aspects to these snapshot weighting approaches that limit their usefulness for optimization problems. They either rely on the acquisition of a large number of solution snapshots in an offline data acquisition phase, do not address how the reduced basis can be updated efficiently online when the snapshot weights are updated, or only allow for the interpolation of bases of the same dimension, thereby limiting its flexibility. Whereas these limitations are not detrimental in certain applications, we aim to use POD for optimization with hundreds of design parameters. Optimization algorithms do not in general possess a natural offline-online decomposition. Moreover, covering high-dimensional parameter spaces with solution snapshots is neither computationally feasible (curse of dimensionality) nor desirable, since we only require snapshots in those regions of the parameter space that are explored by the optimization algorithm \textemdash which are not known \textit{a priori}. As such, it is desirable for any suitable reduced model to possess efficient online adaptivity.

We propose a weighted POD approach, wherein weights are assigned to the columns of the snapshot matrix to assign greater importance to data close to the current design point when computing the SVD. The novelty of this work is threefold:
\begin{enumerate}
	\item We derive an \textit{a posteriori} error bound for weighted POD for parametric models, and show how this error bound can be tightened by assigning new snapshot weights for each query point in the parameter space. We assign the weights based on the proximity of each snapshot to the aforementioned query point. 
	\item We present an algorithm with which the SVD of the weighted snapshot matrix can be computed efficiently online.
	\item We generalize the algorithm of Hay et al. \cite{bib:Hay2009} for using snapshot derivatives to compute the derivatives of the reduced basis vectors, to situations with multidimensional parameter spaces and snapshots at multiple points in the parameter space, in an efficient and natural way, by using snapshot weighting.
\end{enumerate}

We start in \cref{sec:background} with a brief overview of high-dimensional models (HDMs) and the application of POD, followed by an SVD updating algorithm for streaming data. We cover the novel contributions in \cref{sec:adapting_POD}: First the weighted POD approach, then its efficient computational implementation, and lastly the reduced basis vector derivative calculation approach. \Cref{sec:optimization_with_POD} presents how we integrate POD into gradient-based optimization algorithms. In \cref{sec:numerical_experiments}, these novel POD extensions are applied to a design optimization problem constrained by a time-dependent partial differential equation (PDE) with $105$ design parameters.

\section{Background}
\label{sec:background}
Let $\Omega \subset \mathbb{R}^d$ be a bounded spatial domain with $d \in \mathbb{N}$, $I = [0, T]$ a time interval with $0 < T \in \mathbb{R}$ and $\mathbb{P} \subset \mathbb{R}^{n_p}$ a compact parameter space. We start in \cref{subsec:highdim_models} by introducing the HDMs and functions that we attempt to approximate. \Cref{subsec:POD} illustrates the use of POD on HDMs, and reviews the POD basis matrix construction. Lastly, \cref{subsec:incremental_SVD} covers the incremental algorithm to efficiently update the SVD of the snapshot matrix whenever new snapshots are appended to it. Collectively, these steps define the POD approach with a global reduced basis, as well as the foundational blocks for the novel methods we propose in \cref{sec:adapting_POD}.

\subsection{High-dimensional models}
\label{subsec:highdim_models}
Let $\mathbb{X}$ be a real Hilbert space on $\Omega$. We consider the space of measurable time-dependent functions taking values in $\mathbb{X}$,
\begin{equation}
	\mathbb{W}_q^{m}(I, \mathbb{X}):=\\
	\Bigg\{y: I \rightarrow \mathbb{X}\ :
	\left(\sum_{\beta \leq m} \ \int_0^T \norm{D_t^\beta y(t)}_\mathbb{X}^q \ {\rm d} t\right)^{1/q} < \infty  \Bigg\},
\end{equation}
whose time derivatives $D_t^\beta y(t) := \frac{\partial^\beta y}{\partial t^\beta}(t)$ up to order $m$ have $\mathbb{X}$-norms that are $q$-integrable on $I$. We use $\mathbb{X} = \mathbb{W}_p^h(\Omega)$ in the rest of this work. In order to generalize this to parametric models, we consider the space of measurable time- and parameter-dependent functions taking values in $\mathbb{X}$, 
\begin{multline}
	\mathbb{W}_q^{m,l}(\mathbb{P}, I, \mathbb{X}):=\\
	\Bigg\{y: \mathbb{P} \times I \rightarrow \mathbb{X}\ :
	\left(\sum_{|\alpha| \leq l} \sum_{\beta \leq m} \ \int_{\mathbb{P}} \int_0^T \norm{D_{\bm{\mu}}^\alpha D_t^\beta y(\bm{\mu}, t)}_\mathbb{X}^q \ {\rm d}t\ {\rm d}\bm{\mu}\right)^{1/q} < \infty  \Bigg\},
\end{multline}
where $\bm{\mu} \in \mathbb{P}$ and $\alpha := (\alpha_1, \ldots, \alpha_{n_p})$ is an $n_p$-tuple of non-negative integers such that $|\alpha| = \sum_{i=1}^{n_p} \alpha_i$ and
\begin{equation}
	D_{\bm{\mu}}^\alpha y := \left[\frac{\partial}{\partial \bm{\mu}_1}\right]^{\alpha_1}\cdots \left[\frac{\partial}{\partial \bm{\mu}_{n_p}}\right]^{\alpha_{n_p}} y.
\end{equation}
It follows that for any $y \in \mathbb{W}_q^{m,l}(\mathbb{P}, I, \mathbb{X})$, the restriction $y(\bm{\mu}, \cdot ) \in \mathbb{W}_q^{m}(I, \mathbb{X})$ for $\bm{\mu} \in \mathbb{P}$.

In this work, we focus on parameter-dependent dynamical systems of the form
\begin{equation}
	\label{eq:dynamicalsystem}
	\frac{\partial y}{\partial t}(\bm{\mu}, t) = f\left(y(\bm{\mu},t), \bm{\mu}, t \right), \quad y(\bm{\mu}, 0) \in \mathbb{X}.
\end{equation}
Here, $y \in \mathbb{W}_q^{m,l}(\mathbb{P}, I, \mathbb{X})$ and $f(y, \cdot, \cdot) \in \mathbb{W}_q^{m-1,l}(\mathbb{P}, I, \mathbb{X})$. Note that we have omitted the dependence on spatial coordinate $x \in \Omega$ to condense the notation. In practice, $y$ is approximated with a numerical solution to a finite-dimensional approximation of \eqref{eq:dynamicalsystem} at a finite number of time steps. We refer to this numerical solution as the HDM solution. Let $k$ denote the time step index, with $\Delta t > 0$ the constant time step size. The numerical solution at time step $k$ is denoted with
\begin{equation}
	\label{eq:seriesexpansion}
	y^k(\bm{\mu}, x) := \sum_{i=1}^{n_x} a_i^k(\bm{\mu}) \gamma_i(x),
\end{equation}
where we have made the dependence of $y^k$ and $\gamma_i$ on $x$ explicit. The $n_x$ basis functions $\gamma_i$ each have a compact support in $\Omega$ and are defined by the numerical discretization, while the degrees of freedom $a_i^k$ define the vector $\bm{a}^k$. Using \eqref{eq:seriesexpansion} in \eqref{eq:dynamicalsystem} lets us define a residual vector $\bm{r}^k: \mathbb{R}^{n_x} \times \mathbb{P} \rightarrow \mathbb{R}^{n_x}$ for each time step. These residual vectors describe the difference after discretization between the left and right sides of \eqref{eq:dynamicalsystem}; $\bm{r}^k (\bm{a}^k,\bm{\mu}) = \bm{0}$ corresponds to a valid degree of freedom vector $\bm{a}^k$ for which $y^k$ (approximately) satisfies \eqref{eq:dynamicalsystem}. Newton's method provides an iterative approach for obtaining such stationary points, through solving linear systems of the form
\begin{equation}
	\label{eq:linearsystem}
	\frac{\partial \bm{r}^k}{\partial \bm{a}_l^k}(\bm{a}_l^k, \bm{\mu})\Delta \bm{a}_{l+1}^k = - \bm{r}^k(\bm{a}_l^k, \bm{\mu}),
\end{equation}
where $\Delta \bm{a}_{l+1}^k = \bm{a}_{l+1}^k - \bm{a}_l^k$ is an incremental update to the numerical solution and $\frac{\partial \bm{r}^k}{\partial \bm{a}_l^k}(\bm{a}_l^k, \bm{\mu}) \in \mathbb{R}^{n_x \times n_x}$. As covered in \cite{bib:Trefethen}, the cost of using direct solution methods to solve \eqref{eq:linearsystem} scales with $n_x^3$. Whereas the cost of using iterative solution methods is lower for large systems, obtaining accurate solutions can still incur a significant computational cost. For linear systems, equation \eqref{eq:linearsystem} needs to be solved once per time step, whereas nonlinear systems typically require multiple iterations per time step. In addition, many-query applications such as uncertainty quantification and optimization, require solutions of \eqref{eq:linearsystem} corresponding to many parameter values $\left\{\bm{\mu}_1, \bm{\mu}_2, \ldots \right\} \subset \mathbb{P}$.

\subsection{Proper orthogonal decomposition}
\label{subsec:POD}
Rather than directly solving equation \eqref{eq:linearsystem}, POD combined with Galerkin projection computes an approximate solution to \eqref{eq:linearsystem} in a low-dimensional subspace of $\mathbb{R}^{n_x}$. Suppose that we are given a POD basis matrix $\bm{\Phi} \in \mathbb{R}^{n_x \times n_r}$ with $\bm{\Phi}^T\bm{\Phi} = \bm{I}_{n_r}$. We define the approximate solution $y_r^k (\bm{\mu}, x)$ as the projection of $y^k$ onto the low-dimensional subspace spanned by the columns of $\bm{\Phi}$, such that
\begin{equation}
	\label{eq:pod_variable}
	y_r^k (\bm{\mu}, x) := \sum_{i=1}^{n_x} \sum_{j=1}^{n_r} \underbrace{\left(\bm{\Phi}_{ij} a_{i}^k(\bm{\mu})\right)}_{=a_{r,j}^k(\bm{\mu})} \underbrace{\left(\bm{\Phi}_{ij} \gamma_{i}(x)\right)}_{=\gamma_{r,j}(x)} = \left(\bm{\Phi} \bm{a}_r^k(\bm{\mu})\right)^T\left( \bm{\Phi} \bm{\gamma}_r(x)\right),
\end{equation}
with $\bm{a}_r^k \in \mathbb{R}^{n_r}$ the reduced degrees of freedom and $\gamma_{r,j} : \Omega \rightarrow \mathbb{R}$ the associated reduced basis functions for a fixed reduced dimension $n_r$ and $j = 1, 2, \dots, n_r$. The corresponding linear relation between the full-dimensional and reduced degree of freedom vectors then is
\begin{equation}
	\label{eq:pod_dof_projection}
	\bm{a}^k := \bm{\Phi} \bm{a}_{r}^k.
\end{equation}
Instead of directly solving the HDM \eqref{eq:linearsystem} we insert \eqref{eq:pod_dof_projection} into \eqref{eq:linearsystem}. This results in an over-determined linear system; to produce a linear system with a single, unique solution we left-multiply with a test matrix $\bm{\Psi}^T \in \mathbb{R}^{n_r \times n_x}$. This results in the $n_r \times n_r$ system for the POD coefficients,
\begin{equation}
	\label{eq:pod_equation}
	\bm{\Psi}^T \frac{\partial \bm{r}^k}{\partial \bm{a}_l^k} \bm{\Phi} \Delta \bm{a}_{r,l+1}^k = - \bm{\Psi}^T \bm{r}^k(\bm{\Phi} \bm{a}_{r,l}^k).
\end{equation}
Where, as before, $l$ denotes the current iteration number of Newton's method. POD with Galerkin projection is computationally efficient if $n_r \ll n_x$. Often $n_r$ is on the order of tens or hundreds, and thus direct methods can be used to efficiently solve \eqref{eq:pod_equation}. Letting $\bm{\Psi} \neq \bm{\Phi}$ results in a POD-Petrov-Galerkin approach. In this work, we focus instead on POD-Galerkin approaches, in which $\bm{\Psi} = \bm{\Phi}$. Once the low-dimensional solution $\bm{a}_{r,l+1}^k \in \mathbb{R}^{n_r}$ is known, an approximation $\hat{\bm{a}}_{l+1}^k \in \mathbb{R}^{n_x}$ to the solution $\bm{a}_{l+1}^k$ of \eqref{eq:linearsystem} can be computed with \eqref{eq:pod_dof_projection}. If $\bm{\Phi}$ is sufficiently accurate, $\hat{\bm{a}}_{l+1}^k \approx \bm{a}_{l+1}^k$.

A computational bottleneck is that the matrix $\frac{\partial \bm{r}^k}{\partial \bm{a}_l^k}$ and the right-side vector $\bm{r}^k$ still need to be assembled in order to solve the reduced equation \eqref{eq:pod_equation}. The costs of assembling these terms scale with $n_x$, and can thus be a significant part of the total cost of solving \eqref{eq:linearsystem} when $n_r \ll n_x$. Approximate assembly approaches called hyper-reduction methods can be used to speed up matrix and vector assembly. Applications of such methods within the context of optimization can be found for example in \cite{bib:Amsallem2015,bib:vanSchie2023}.

The POD basis matrix $\bm{\Phi}$ is constructed based on solution data of the HDM \eqref{eq:linearsystem}. Suppose we have available $n_s$ sets of solution snapshots $\bm{A}_j(\bm{\mu}_j) \in \mathbb{R}^{n_x \times n_t}$, such that $\bm{A}_j (\bm{\mu}_j) := \begin{bmatrix} \bm{a}_j^1(\bm{\mu}_j) & \bm{a}_j^2(\bm{\mu}_j) & \cdots & \bm{a}_j^{n_t}(\bm{\mu}_j) \end{bmatrix}$ at various design parameter vectors $\bm{\mu}_j \in \mathbb{P}$ for $j = 1, 2, \ldots, n_s$. We gather these in the snapshot matrix 
\begin{equation}
	\label{eq:snapshotmatrix}
	\bm{A} := \begin{bmatrix} \bm{A}_1(\bm{\mu}_1) & \bm{A}_2(\bm{\mu}_2) & \cdots & \bm{A}_{n_s}(\bm{\mu}_{n_s}) \end{bmatrix} \in \mathbb{R}^{n_x \times n_s n_t}.
\end{equation}
Let $\bm{E} \in \mathbb{R}^{n_x \times n_x}$ denote the symmetric positive-definite inner product matrix corresponding to inner product $\langle \cdot , \cdot \rangle_{\mathbb{X}}$, with $\bm{E} = \bm{E}^{1/2} \bm{E}^{1/2}$. The POD basis matrix $\bm{\Phi}$ can be found by first computing the thin SVD,
\begin{equation}
	\label{eq:standard_svd}
	\bm{E}^{1/2} \bm{A} = \bm{U} \bm{\Sigma} \bm{V}^T,
\end{equation}
where $\bm{U} = \begin{bmatrix} \bm{u}_1 & \cdots & \bm{u}_{n_s n_t} \end{bmatrix} \in \mathbb{R}^{n_x \times n_s n_t}$ consists of the left singular vectors as its columns, $\bm{\Sigma}$ is a matrix with nonzero singular values $\{\sigma_i\}_{i=1}^{n_s n_t}$ on its diagonal in decreasing order and $\bm{V} = \begin{bmatrix} \bm{v}_1 & \cdots & \bm{v}_{n_s n_t} \end{bmatrix} \in \mathbb{R}^{n_s n_t \times n_s n_t}$ contains the right singular vectors as its columns. The matrices $\bm{U}$ and $\bm{V}$ each have orthonormal columns. The POD basis matrix $\bm{\Phi}$ is constructed by taking the first $n_r$ columns of $\bm{U}$ and left-multiplying with $\bm{E}^{-1/2}$, such that
\begin{equation}
	\bm{\Phi} := \begin{bmatrix} \bm{\phi}_1 & \bm{\phi}_2 \cdots & \bm{\phi}_{n_r} \end{bmatrix} = \bm{E}^{-1/2}\begin{bmatrix} \bm{u}_1 & \bm{u}_2 \cdots & \bm{u}_{n_r} \end{bmatrix}.
\end{equation}
These columns of $\bm{U}$ correspond to the largest singular values of $\bm{E}^{1/2}\bm{A}$ and together span the $n_r$-dimensional column space that best approximates $\bm{E}^{1/2}\bm{A}$. Moreover, the POD modes $\left\{\bm{\phi}_i \right\}_{i=1}^{n_r}$ are orthonormal with respect to inner product $\langle \cdot , \cdot \rangle_{\mathbb{X}}$.

\subsection{Incremental singular value decompositions}
\label{subsec:incremental_SVD}
Since we do not know \textit{a priori} which regions of the parameter space are of interest when solving an optimization problem, new solution snapshots are acquired while running the optimization algorithm. Useful model reduction approaches for optimization thus require an efficient way to update the SVD and, hence, the POD basis, whenever new data is appended to the snapshot matrix. Moreover, we want to control the size of the stored snapshot matrix to manage its memory footprint. To simultaneously allow for both of these requirements, we use an incremental SVD approach based on \cite{bib:Brand2003, bib:Iwen2016}. Suppose that the snapshot matrix $\bm{X} \in \mathbb{R}^{n_x \times n_s n_t}$ (with SVD $\bm{U}_{\bm{X}} \bm{\Sigma}_{\bm{X}} \bm{V}_{\bm{X}}^T$) contains data from $n_s$ parametric solutions, and we obtain a new matrix $\bm{Y} \in \mathbb{R}^{n_x \times n_t}$ (with its own SVD $\bm{U}_{\bm{Y}} \bm{\Sigma}_{\bm{Y}} \bm{V}_{\bm{Y}}^T$) of time trajectories for a specific parameter value. \Cref{alg:svd_update} covers the incremental SVD algorithm, which computes the SVD of $\begin{bmatrix} \bm{X} & \bm{Y} \end{bmatrix} \in \mathbb{R}^{n_x \times (n_s + 1) n_t}$ based on the separate SVDs of $\bm{X}$ and $\bm{Y}$. In the rest of this work we truncate all SVD components that correspond to singular values smaller than $\epsilon_{\text{SVD}} = 10^{-8}$. 

\begin{algorithm}
	\caption{Incremental SVD algorithm} 
	\label{alg:svd_update}
	\begin{algorithmic}[1]
		\REQUIRE{ SVD of snapshot matrix $\bm{X} = \bm{U}_{\bm{X}} \bm{\Sigma}_{\bm{X}} \bm{V}_{\bm{X}}^T \in \mathbb{R}^{n_x \times n_s n_t}$, SVD of appended snapshot $\bm{Y} = \bm{U}_{\bm{Y}} \bm{\Sigma}_{\bm{Y}} \bm{V}_{\bm{Y}}^T \in \mathbb{R}^{n_x \times n_t}$, orthonormality tolerance $\epsilon_{\bot} \in \mathbb{R}$ }
		\ENSURE{SVD of extended snapshot matrix $\begin{bmatrix} \bm{X} & \bm{Y} \end{bmatrix} = \bm{U} \bm{\Sigma} \bm{V}^T\in$ $\mathbb{R}^{n_x \times (n_s) n_t}$}
		
		\STATE{Compute orthogonal complement of $\bm{U}_{\bm{Y}}$ w.r.t. $\bm{U}_{\bm{X}}$: $\bm{U}_{\bm{Y}, \bot} := \bm{U}_{\bm{Y}} - \bm{U}_{\bm{X}} \bm{U}_{\bm{X}}^T \bm{U}_{\bm{Y}}$}
		\STATE{Compute QR-decomposition of $\bm{U}_{\bm{Y}, \bot}$: $\bm{U}_{\bm{Y}, \bot} = \bm{Q}\bm{R}$}
		\STATE{Compute $\bm{G} := \begin{bmatrix}
				\bm{\Sigma}_{\bm{X}} & \bm{U}_{\bm{X}}^T \bm{U}_{\bm{Y}} \bm{\Sigma}_{\bm{Y}} \\
				& \bm{R} \bm{\Sigma}_{\bm{Y}}
			\end{bmatrix}$}
		\STATE{Compute SVD: $\bm{G} = \bm{U}_{\bm{G}} \bm{\Sigma}_{\bm{G}} \bm{V}_{\bm{G}}^T$}
		\STATE{Set $\bm{U} := \begin{bmatrix}
				\bm{U}_{\bm{X}} & \bm{Q}
			\end{bmatrix} \bm{U}_{\bm{G}}$, $\bm{\Sigma} := \bm{\Sigma}_{\bm{G}}$, $\bm{V} := \begin{bmatrix}
				\bm{V}_{\bm{X}} & \\
				& \bm{V}_{\bm{Y}}
			\end{bmatrix} \bm{V}_{\bm{G}}$}
		\WHILE{$\max\left(\text{diag}\left(\bm{U}^T \bm{U} \right) \right) > 1 + \epsilon_{\bot} \  \textbf{or} \ \min\left(\text{diag}\left(\bm{U}^T \bm{U} \right) \right) < 1 - \epsilon_{\bot}$}
		\STATE{ Compute QR-decomposition of $\begin{bmatrix}
				\bm{X} & \bm{Y}
			\end{bmatrix} \bm{V}$: $\begin{bmatrix}
				\bm{X} & \bm{Y}
			\end{bmatrix} \bm{V} = \tilde{\bm{Q}}\tilde{\bm{R}}$}
		\STATE{ Compute SVD of $\tilde{\bm{Q}}^T\begin{bmatrix}
				\bm{X} & \bm{Y}
			\end{bmatrix}$: $\tilde{\bm{Q}}^T\begin{bmatrix}
				\bm{X} & \bm{Y}
			\end{bmatrix} = \hat{\bm{U}}\hat{\bm{\Sigma}} \hat{\bm{V}}^T$}
		\STATE{Set $\bm{U} := \tilde{\bm{Q}} \hat{\bm{U}}$, $\bm{\Sigma} := \hat{\bm{\Sigma}}$, $\bm{V} := \hat{\bm{V}}$}
		\ENDWHILE
	\end{algorithmic}
\end{algorithm}

Let $n_{\bm{X}}$ denote the number of retained SVD components of matrix $\bm{X}$, and $n_{\bm{Y}}$ the same for $\bm{Y}$. Whereas the cost of directly computing the SVD of $\begin{bmatrix}
	\bm{X} & \bm{Y} \end{bmatrix}$ is $\mathcal{O}\left(\min(n_x(n_s + 1)^2 n_t^2, n_x^2(n_s + 1) n_t ) \right)$ \cite{bib:Golub}, the dominant operations in \Cref{alg:svd_update} consist of a QR-decomposition of a matrix of size $n_x \times n_{\bm{Y}}$, at a cost of $\mathcal{O}\left(n_x n_{\bm{Y}}^2) \right)$, and an SVD of a matrix of size $(n_{\bm{X}} + n_{\bm{Y}}) \times (n_{\bm{X}} + n_{\bm{Y}})$, at a cost of $\mathcal{O}\left((n_{\bm{X}} + n_{\bm{Y}})^3 \right)$.
These operations make use of the SVD of snapshot matrix $\bm{Y}$, which comes at a cost of $\mathcal{O}\left(n_x n_t^2\right)$ as long as $n_t \leq n_x$.

Repeated application of \Cref{alg:svd_update} can lead to accumulating round-off errors. This in turn results in the loss of orthonormality of the singular vectors. To remedy this, we include an iterative correction step that is based on the description in \cite{bib:Vasudevan2019}. Each iteration of the correction step computes a QR-decomposition and an SVD of a smaller matrix. When the $2$-norm of one of the left-singular vectors deviates from $1$ by more than $\epsilon_{\bot}$, we keep iteratively applying the aforementioned correction step until the norm deviation is smaller than $\epsilon_{\bot}$. Each iteration of the correction step requires a QR-decomposition of a matrix of size $n_x \times (n_{\bm{X}} + n_{\bm{Y}})$, at a cost of $\mathcal{O}\left(n_x (n_{\bm{X}} + n_{\bm{Y}})^2 \right)$ provided that $n_{\bm{X}} + n_{\bm{Y}} \leq n_x$, and an SVD of a matrix of size $(n_{\bm{X}} + n_{\bm{Y}}) \times (n_{\bm{X}} + n_{\bm{Y}})$, at a cost of
$\mathcal{O}\left((n_{\bm{X}} + n_{\bm{Y}})^3 \right)$. In practice, we find that using $\epsilon_{\bot} = 10^{-2}$ gives good results, and requires only one or two correction iterations whenever the norms of the reduced basis vectors deviate from one by more than $\epsilon_{\bot}$.

\section{Adapting parametric POD with snapshot weighting and reduced basis derivatives}
\label{sec:adapting_POD}
While POD is a widely-used model reduction approach, it does not take the parametric structure of the HDM into account. This can lead to poor prediction accuracy unless a large number of basis vectors are used. We introduce two ways in which POD can be made more robust and accurate with respect to parameter changes. The first of these is covered in \cref{subsec:weighted_POD}: We show how the POD approximation error at query point $\hat{\bm{\mu}} \in \mathbb{P}$ can be bounded from above for a set of weighted snapshots, and how this novel upper bound can be minimized by assigning the snapshot weights inversely proportional to the distance between each snapshot and $\hat{\bm{\mu}}$. \Cref{subsec:snapshot_weighting} covers an efficient algorithm to compute the POD basis $\bm{\Phi}$ from the weighted snapshot matrix. The second approach is covered in \cref{subsec:state_sensitivities} and allows us to leverage the derivatives of (state) snapshots with respect to the design parameters to compute the corresponding derivatives of the reduced basis. These can make the reduced basis more robust with respect to parameter changes.

\subsection{Weighted POD}
\label{subsec:weighted_POD}
We define the POD approximation error at a new query point $\hat{\bm{\mu}} \in \mathbb{P}$, and prove an \textit{a posteriori} bound for this error. From this bound follows the approach with which we assign weights to the snapshots contained in $\bm{A}$ defined in \eqref{eq:snapshotmatrix}. Let $\left\{y^k(\hat{\bm{\mu}})\right\}_{k=1}^{n_t}$ denote the HDM solution $y(\hat{\bm{\mu}}, t)$ at each of the time steps, and let $\left\{y_r^k(\hat{\bm{\mu}})\right\}_{k=1}^{n_t}$ denote the corresponding approximate solution $y_r(\hat{\bm{\mu}}, t)$ that is obtained through the use of POD, where we have again omitted dependencies on the spatial coordinate $x \in \Omega$. We express the POD approximation error in the squared $\mathbb{W}_2^1(I,\mathbb{X})$-norm and approximate it in the $\mathbb{X}$-norm by using the trapezoid rule, as in \cite{bib:Kunisch2003}. This leads to the weighted sum
\begin{equation}
	\label{eq:time_integral_norm}
	\norm{y(\hat{\bm{\mu}}, t) - y_r(\hat{\bm{\mu}}, t)}_{\mathbb{W}_2^1(I,\mathbb{X})}^2 \approx \sum_{k=1}^{n_t} \omega^k \norm{y^k(\hat{\bm{\mu}}) - y_r^k(\hat{\bm{\mu}})}_{\mathbb{X}}^2,
\end{equation}
where $\left\{\omega_k \right\}_{k=1}^{n_t}$ are time integration weights. We recall that $\mathbb{X} = \mathbb{W}_p^h(\Omega)$. We derive an \textit{a posteriori} error bound for \eqref{eq:time_integral_norm}, where we consider the possibility of assigning different time integration weights $\left\{\omega_k \right\}_{k=1}^{n_t}$ and snapshot weights $\left\{\omega_j \right\}_{j=1}^{n_s}$ when computing the POD basis matrix $\bm{\Phi}$.

\begin{remark}
	If (approximate) time derivative data is included in the snapshot matrix, $\mathbb{W}_2^m(I,\mathbb{X})$-norms with $m > 1$ can be used instead. We do not consider such situations here; interested readers are referred to \cite{bib:Iliescu2014, bib:Koc2021}.   
\end{remark}

\begin{theorem}[weighted POD error bound]\label{th:errorbound}
	Let $\left\{\left\{y^k(\bm{\mu}_j) \right\}_{j=1}^{n_s} \right\}_{k=1}^{n_t}$ denote a set of given solution snapshots to equation \eqref{eq:linearsystem}, and let $\left\{\omega_j \right\}_{j=1}^{n_s}$ be a set of snapshot weights that form a partition of unity. Suppose that the POD basis $\left\{\bm{\phi}_i \right\}_{i=1}^{n_r} = \left\{\bm{E}^{-1/2}\overline{\bm{\phi}}_i \right\}_{i=1}^{n_r}$ corresponds to the largest $n_r$ singular values $\left\{\sigma_i \right\}_{i=1}^{n_r}$ and associated left-singular vectors $\left\{\overline{\bm{\phi}}_i \right\}_{i=1}^{n_r}$ of $\bm{X}_{\hat{\bm{\mu}}} = \bm{X}\bm{W}_{\hat{\bm{\mu}}}^{1/2} = \bm{E}^{1/2}\bm{A}\bm{W}_k^{1/2}\bm{W}_{\hat{\bm{\mu}}}^{1/2}$, where $\bm{E}$ is the discretized $\mathbb{X}$-inner product matrix, $\bm{A}$ the snapshot matrix defined in \eqref{eq:snapshotmatrix}, and $\bm{W}_k$ and $\bm{W}_{\hat{\bm{\mu}}}$ are matrices that contain the weights $\omega^k$ and $\omega_j^2$ on their diagonals respectively. Then the time-weighted POD approximation error for a new query point $\hat{\bm{\mu}} \notin \left\{\bm{\mu}_j \right\}_{j=1}^{n_s}$ expressed in the squared $\mathbb{X}$-norm is bounded as 
	\begin{multline}
		\label{eq:PODerrorbound}
		\sum_{k=1}^{n_t} \omega^k \norm{y^k(\hat{\bm{\mu}}) - y_r^k(\hat{\bm{\mu}})}_{\mathbb{X}}^2 \leq 3 n_s \sum_{i= n_r + 1}^{n_s n_t} \sigma_i^2\\
		+ 3 n_s n_p \sum_{k=1}^{n_t} \omega^k \sum_{j=1}^{n_s}  \omega_j^2 \left[\norm{\hat{\bm{\mu}} - \bm{\mu}_j + \rho_j^{1,k} \bm{1}}_{\bm{C}_j^{k,1}}^2 + \norm{\hat{\bm{\mu}} - \bm{\mu}_j + \rho_j^{3,k} \bm{1}}_{\bm{C}_j^{k,3}}^2\right],
	\end{multline}
	where $\left\{\left\{ \rho_j^{1,k}\right\}_{k=1}^{n_t}\right\}_{j=1}^{n_s}$, $\left\{\left\{ \rho_j^{3,k}\right\}_{k=1}^{n_t}\right\}_{j=1}^{n_s}$ are the sets of (positive) radii of open balls centered around $\bm{\mu}_j$ for each index $j$, and $\bm{1} \in \mathbb{R}^{n_p}$ is a vector whose entries are all equal to $1$.
\end{theorem}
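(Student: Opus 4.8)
The plan is to interpret the reduced approximation $y_r^k(\hat{\bm{\mu}})$ as the $\mathbb{X}$-orthogonal projection $\Pi y^k(\hat{\bm{\mu}})$ of the true solution onto $\mathrm{span}\{\bm{\phi}_i\}_{i=1}^{n_r}$, consistent with \eqref{eq:pod_variable}, so that the left-hand side of \eqref{eq:PODerrorbound} becomes $\sum_{k=1}^{n_t}\omega^k\norm{\Pi^\perp y^k(\hat{\bm{\mu}})}_{\mathbb{X}}^2$ with $\Pi^\perp := \bm{I}-\Pi$. The whole argument then reduces to controlling the single quantity $\norm{\Pi^\perp y^k(\hat{\bm{\mu}})}_{\mathbb{X}}$ for each $k$ and reassembling with the time weights $\omega^k$.

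First I would exploit the partition-of-unity property $\sum_{j=1}^{n_s}\omega_j = 1$ together with a first-order Taylor expansion of $y^k$ in $\bm{\mu}$ around each snapshot location $\bm{\mu}_j$. Writing $y^k(\hat{\bm{\mu}}) = \sum_j \omega_j y^k(\hat{\bm{\mu}})$ and expanding each copy, $y^k(\hat{\bm{\mu}})$ decomposes into three pieces: a weighted snapshot combination $\sum_j\omega_j y^k(\bm{\mu}_j)$, a weighted first-order gradient term $\sum_j\omega_j \sum_{a=1}^{n_p}\partial_{\mu_a}y^k(\bm{\mu}_j)(\hat{\mu}_a-\mu_{j,a})$, and a weighted remainder. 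Applying $\Pi^\perp$ and the elementary inequality $\norm{a+b+c}_{\mathbb{X}}^2\le 3\left(\norm{a}_{\mathbb{X}}^2+\norm{b}_{\mathbb{X}}^2+\norm{c}_{\mathbb{X}}^2\right)$ produces the overall factor $3$ and isolates the three terms appearing on the right-hand side.

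For the snapshot piece I would use the optimality of the truncated SVD. Since $\Pi^\perp$ is non-expansive I first apply $\norm{\Pi^\perp\sum_j \omega_j v_j}_{\mathbb{X}}^2\le n_s\sum_j\omega_j^2\norm{\Pi^\perp v_j}_{\mathbb{X}}^2$ (Cauchy--Schwarz over the $n_s$ terms, yielding the factor $n_s$), and then identify $\sum_{k}\omega^k\sum_j\omega_j^2\norm{\Pi^\perp y^k(\bm{\mu}_j)}_{\mathbb{X}}^2$ with $\norm{(\bm{I}-\overline{\bm{\phi}}\,\overline{\bm{\phi}}^T)\bm{X}_{\hat{\bm{\mu}}}}_F^2=\sum_{i=n_r+1}^{n_s n_t}\sigma_i^2$, where $\overline{\bm{\phi}}$ collects the retained left-singular vectors. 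This last identity is where the weighting through $\bm{W}_k^{1/2}\bm{W}_{\hat{\bm{\mu}}}^{1/2}$ and the conjugation by $\bm{E}^{1/2}$ must be tracked: one checks that the $\mathbb{X}$-orthogonal projection error of the coefficient vector $\bm{a}^k(\bm{\mu}_j)$ equals the Euclidean projection error of $\bm{E}^{1/2}\bm{a}^k(\bm{\mu}_j)$ onto $\mathrm{span}\{\overline{\bm{\phi}}_i\}$, so that the weighted column norms of $\Pi^\perp\bm{X}_{\hat{\bm{\mu}}}$ reproduce exactly the summands, and Eckart--Young gives the tail sum of squared singular values.

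For the gradient and remainder pieces I would again drop $\Pi^\perp$ by non-expansiveness, apply the same $n_s$-term Cauchy--Schwarz, and then bound each contribution by summing over the $n_p$ parameter directions, which generates the factor $n_p$ and recasts the result as the weighted quadratic forms $\norm{\hat{\bm{\mu}}-\bm{\mu}_j+\rho_j^{1,k}\bm{1}}_{\bm{C}_j^{k,1}}^2$ and $\norm{\hat{\bm{\mu}}-\bm{\mu}_j+\rho_j^{3,k}\bm{1}}_{\bm{C}_j^{k,3}}^2$, where the matrices $\bm{C}_j^{k,\cdot}$ are Gram matrices of the relevant derivatives of $y^k$ in the $\mathbb{X}$-inner product. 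I expect the main obstacle to be precisely this step: making the remainder \emph{a posteriori} computable and packaging it into the stated shifted-weighted-norm form. The shift $\rho_j^{\cdot,k}\bm{1}$ should arise from bounding the derivative evaluations uniformly over an open ball of radius $\rho_j$ about $\bm{\mu}_j$ (so that an intermediate Taylor/mean-value evaluation point lies within $\rho_j$ of $\bm{\mu}_j$ componentwise), and matching the constant $3 n_s n_p$ together with the exact definitions of $\bm{C}_j^{k,1}$, $\bm{C}_j^{k,3}$, $\rho_j^{1,k}$, and $\rho_j^{3,k}$ to the statement is the delicate bookkeeping on which the rest of the proof hinges.
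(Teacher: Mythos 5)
Your high-level ingredients are largely the right ones---the projection interpretation of $y_r^k(\hat{\bm{\mu}})$, the partition of unity, the factor $3$ from a three-way split, the factors $n_s$ and $n_p$ from Cauchy--Schwarz, and the exact identification of the weighted snapshot projection error with the singular-value tail of $\bm{X}_{\hat{\bm{\mu}}}$ (this last step matches the paper's treatment of its ``Term 2'')---but the middle of your argument diverges from the paper in a way that leaves a genuine gap. The paper does not Taylor-expand to first order. Its three terms come from a telescoping split: $y^k(\hat{\bm{\mu}}) - \sum_j \omega_j y^k(\bm{\mu}_j)$ (Term 1), the weighted snapshot combination minus its POD projection (Term 2), and the projection of $\sum_j\omega_j y^k(\bm{\mu}_j) - y^k(\hat{\bm{\mu}})$ (Term 3). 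Both non-SVD terms are therefore the \emph{same} first-order remainder of a \emph{zeroth-order} Taylor polynomial---once unprojected, once projected---and this is exactly why the bound contains two shifted weighted norms with superscripts $1$ and $3$: they are Terms 1 and 3 of that split, carrying possibly different ball radii $\rho_j^{1,k},\rho_j^{3,k}$ and constants $\bm{C}_j^{k,1},\bm{C}_j^{k,3}$. Your split (snapshots $+$ gradient term $+$ second-order remainder) cannot reproduce this structure: your gradient piece would yield an \emph{unshifted} norm $\norm{\hat{\bm{\mu}}-\bm{\mu}_j}^2_{\bm{C}}$ (and a shifted norm neither dominates nor is dominated by an unshifted one, since components of $\hat{\bm{\mu}}-\bm{\mu}_j$ may be negative), while your remainder piece would be controlled by second derivatives, giving a term that is structurally different from the one in the statement.

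The second, related gap is regularity. The paper's framework assumes only $y \in \mathbb{W}_2^{1,1}(I,\mathbb{P},\mathbb{X})$, i.e.\ first-order \emph{weak} parametric derivatives, so neither the pointwise evaluation $\partial_{\mu_a}y^k(\bm{\mu}_j)$ appearing in your gradient term nor a second-order remainder is available. This is precisely why the paper invokes \emph{averaged} Taylor polynomials (Brenner--Scott): the polynomial is averaged over a ball $B(\bm{\mu}_j,\rho_j^k)$ with a cutoff function $\eta$, and the shifts $\rho_j^{\cdot,k}\bm{1}$ in the final bound arise from replacing $\hat{\mu}_\alpha - \overline{\mu}_\alpha$ by $\hat{\mu}_\alpha - \mu_{j,\alpha} + \rho_j^k$ over that averaging ball---not, as you guessed, from uniformly bounding derivative evaluations over a ball. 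To repair your argument, drop the first-order expansion: write $y^k(\hat{\bm{\mu}}) = \sum_j\omega_j\bigl(y^k(\bm{\mu}_j) + R_j^{1,k}(\hat{\bm{\mu}})\bigr)$ with $R_j^{1,k}$ the averaged first-order remainder, and then either follow the paper's three-term telescoping split, or keep your projection-based viewpoint---which then collapses to a \emph{two}-term split (snapshot piece plus remainder piece) and in fact yields a slightly tighter constant that still implies \eqref{eq:PODerrorbound}, since every term on its right-hand side is nonnegative.
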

\begin{corollary}
	\label{cor:globalPODbound}
	Setting each snapshot weight $\omega_j$ equal to $1/n_s$ results in an error bound for the global POD basis,
	\begin{multline}
		\label{eq:globalPODbound}
		\sum_{k=1}^{n_t} \omega^k \norm{y^k(\hat{\bm{\mu}}) - y_r^k(\hat{\bm{\mu}})}_{\mathbb{X}}^2 \leq \frac{3}{n_s} \sum_{i= n_r + 1}^{n_s n_t} \sigma_i^2 \\
		+ \frac{3 n_p}{n_s} \sum_{k=1}^{n_t} \omega^k \sum_{j=1}^{n_s}  \left[\norm{\hat{\bm{\mu}} - \bm{\mu}_j + \rho_j^{1,k} \bm{1}}_{\bm{C}_j^{k,1}}^2 + \norm{\hat{\bm{\mu}} - \bm{\mu}_j + \rho_j^{3,k} \bm{1}}_{\bm{C}_j^{k,3}}^2\right],
	\end{multline}
	where $\left\{\sigma_i \right\}_{i=1}^{n_s n_r}$ are the singular values of $\frac{1}{n_s}\bm{X}$.
\end{corollary}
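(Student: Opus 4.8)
The plan is to obtain \cref{cor:globalPODbound} as a direct specialization of \cref{th:errorbound}, since the uniform choice $\omega_j = 1/n_s$ is an admissible set of snapshot weights. First I would verify the hypotheses: the weights $\omega_j = 1/n_s$ form a partition of unity because $\sum_{j=1}^{n_s}\omega_j = n_s \cdot (1/n_s) = 1$, so \cref{th:errorbound} applies verbatim and its error bound \eqref{eq:PODerrorbound} holds with these particular weights.

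The key observation is what the uniform weighting does to the weight matrix $\bm{W}_{\hat{\bm{\mu}}}$. Since $\bm{W}_{\hat{\bm{\mu}}}$ carries the entries $\omega_j^2 = 1/n_s^2$ on its diagonal, the uniform choice makes it a scalar multiple of the identity, $\bm{W}_{\hat{\bm{\mu}}} = (1/n_s^2)\,\bm{I}$, hence $\bm{W}_{\hat{\bm{\mu}}}^{1/2} = (1/n_s)\,\bm{I}$. The weighted snapshot matrix of the theorem therefore collapses to $\bm{X}_{\hat{\bm{\mu}}} = \bm{X}\bm{W}_{\hat{\bm{\mu}}}^{1/2} = (1/n_s)\bm{X}$, so the singular values $\{\sigma_i\}$ appearing in \eqref{eq:PODerrorbound} are exactly the singular values of $(1/n_s)\bm{X}$ claimed in \cref{cor:globalPODbound} — equivalently, $1/n_s$ times those of $\bm{X}$ — while the associated left-singular vectors, and hence the POD basis itself, are unchanged, since a positive scalar multiple does not alter the singular directions.

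With this in hand I would substitute $\omega_j^2 = 1/n_s^2$ into the two terms of \eqref{eq:PODerrorbound}. In the second term the factor $\omega_j^2$ is constant in $j$ and can be pulled out of the inner sum, turning the prefactor $3 n_s n_p$ into $3 n_s n_p \cdot (1/n_s^2) = 3 n_p / n_s$ and leaving the bracketed ball-radius contributions summed over $j$ and $k$ exactly as in \eqref{eq:globalPODbound}. The first term is handled by the same bookkeeping: once the $\sigma_i$ are identified as the singular values of $(1/n_s)\bm{X}$, the truncated singular-value sum of \eqref{eq:globalPODbound} follows by tracking the single remaining power of $n_s$ in the prefactor.

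The main thing to be careful about — rather than a genuine obstacle — is the consistent accounting of the scalar $n_s$ factors: the $1/n_s$ coming from $\bm{W}_{\hat{\bm{\mu}}}^{1/2}$ can either be absorbed into the definition of the $\sigma_i$ (so that they are the singular values of $(1/n_s)\bm{X}$, as stated) or kept in the prefactor multiplying the singular values of $\bm{X}$, and one must fix a single convention so that the truncation coefficient and the scaling of the $\sigma_i$ are not both charged the same factor. Once the convention is pinned down, no analysis beyond the substitution is required; the inequality is inherited wholesale from \cref{th:errorbound}.
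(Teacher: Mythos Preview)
Your proposal is correct and follows essentially the same approach as the paper: the paper's proof is a one-line substitution of $\omega_j = 1/n_s$ into \eqref{eq:PODerrorbound}, noting that the prefactor $n_s$ combines with the factors $1/n_s^2$ to give the $1/n_s$ in \eqref{eq:globalPODbound}. Your additional remarks about $\bm{W}_{\hat{\bm{\mu}}}^{1/2}$ collapsing to $(1/n_s)\bm{I}$ and the need to fix a single convention for where the $1/n_s$ lands (prefactor versus the definition of the $\sigma_i$) are well taken and in fact more explicit than what the paper writes.
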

\begin{proof}[Proof of Corollary \ref{cor:globalPODbound}]
	In the case $\omega_j = 1/ n_s$, the factor $n_s$ on the right side of \eqref{eq:PODerrorbound} is canceled by the factors $1/n_s^2$, which directly gives \eqref{eq:globalPODbound}.
\end{proof}
We introduce some relevant concepts that are used in the proof of Theorem~\ref{th:errorbound}. These are taken from \cite[Ch.~4]{bib:Brenner2008} and are stated here for convenience and completeness.
\begin{definition}[averaged Taylor polynomial]
	Suppose $y \in \mathbb{W}_2^{1,1}(I,\mathbb{P},\mathbb{X})$, such that $y^k(\bm{\mu})$ is its time-discrete realization for time step $k$ at $\bm{\mu} \in \mathbb{P}$ with $\mathbb{P}$ convex. Let $T_{\overline{\bm{\mu}}}^g( y^k, \bm{\mu})$ denote the Taylor polynomial of $y^k$ centered around $\overline{\bm{\mu}} \in \mathbb{P}$ using derivatives up to (but not including) order $g$, evaluated in $\bm{\mu}$. Let $B(\bm{\mu}_0, \rho) = \left\{\bm{\mu} \in \mathbb{P} \ : \ \norm{\bm{\mu} - \bm{\mu}_0}_2 < \rho \right\}$ be an open ball of radius $\rho > 0$ centered around $\bm{\mu}_0 \in \mathbb{P}$, and let $\eta(\bm{\mu}) \in C_0^\infty (\mathbb{P})$ be a function such that $\text{supp}(\eta) = \overline{B}$ and $\int_{\mathbb{P}} \eta(\bm{\mu})\ \rm{d}\bm{\mu} = 1$. Then the Taylor polynomial of $y^k$ of order $g$ averaged over $B$ is
	\begin{equation}
		Q^g (y^k, \bm{\mu}) := \int_B \eta (\overline{\bm{\mu}}) T_{\overline{\bm{\mu}}}^g( y^k, \bm{\mu})\ {\rm d}\overline{\bm{\mu}}.
	\end{equation}
	The pointwise approximation error of the Taylor polynomial is defined as
	\begin{equation}
		R^{g,k}(\bm{\mu}) := y^k(\bm{\mu}) - Q^g (y^k, \bm{\mu}),
	\end{equation}
	\end{definition}
Since $y \in \mathbb{W}_2^{1,1}(I,\mathbb{P},\mathbb{X})$, it has weak derivatives with respect to coordinates of $\mathbb{P}$ almost everywhere. Thus $Q^g (y^k, \bm{\mu})$ exists for all $\bm{\mu} \in \mathbb{P}$.
\begin{proposition}[approximation error of averaged Taylor polynomials]
	Suppose $\mathbb{P}$ is convex, $y \in \mathbb{W}_2^{1,1}(I,\mathbb{P},\mathbb{X})$, and let $y^k$ be one of its time-discrete realizations. Let $Q^g (y^k, \bm{\mu})$ denote the averaged Taylor polynomial of $y^k$ evaluated in $\bm{\mu} \in \mathbb{P}$. The pointwise approximation error of the Taylor polynomial, $R^{g,k}(\bm{\mu})$, can be written as
	\begin{equation}
		\begin{aligned}
			R^{g,k}(\bm{\mu}) = & \int_B \eta (\overline{\bm{\mu}}) \left[ \sum_{|\alpha| = g} (\bm{\mu} - \overline{\bm{\mu}})^\alpha \int_0^1 \frac{z^{g-1}}{\alpha !} D^\alpha y^k\left(\bm{\mu} + z (\overline{\bm{\mu}} - \bm{\mu})\right)\ {\rm d}z \right]\ {\rm d}\overline{\bm{\mu}}.
		\end{aligned}
	\end{equation}
	For $g = 1$ we use the more compact notation
	\begin{equation}
		\begin{aligned}
			R^{1,k}(\bm{\mu})  = & \int_B \eta (\overline{\bm{\mu}}) \left[ \sum_{\alpha = 1}^{n_p} (\mu_\alpha - \overline{\mu}_\alpha) \int_{\bm{\mu}}^{\overline{\bm{\mu}}} D^\alpha y^k\left(\bm{z} )\right)\ {\rm d}\bm{z} \right]\ {\rm d}\overline{\bm{\mu}},
		\end{aligned}
	\end{equation}
	with $\bm{z} = \bm{\mu} + z (\overline{\bm{\mu}} - \bm{\mu})$, such that $z \in [0, 1]$ and $\bm{z} \in \mathbb{P}$.
\end{proposition}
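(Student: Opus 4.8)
The plan is to reduce the claim to the classical pointwise Taylor-with-integral-remainder identity along line segments, and then average against the mollifier $\eta$. First I would observe that, because $\eta$ integrates to one over $B$, the averaged Taylor polynomial satisfies $Q^g(y^k,\bm{\mu}) = \int_B \eta(\overline{\bm{\mu}}) T_{\overline{\bm{\mu}}}^g(y^k,\bm{\mu})\,{\rm d}\overline{\bm{\mu}}$ while $y^k(\bm{\mu}) = \int_B \eta(\overline{\bm{\mu}}) y^k(\bm{\mu})\,{\rm d}\overline{\bm{\mu}}$, so that
\[
R^{g,k}(\bm{\mu}) = \int_B \eta(\overline{\bm{\mu}})\left[\,y^k(\bm{\mu}) - T_{\overline{\bm{\mu}}}^g(y^k,\bm{\mu})\,\right]{\rm d}\overline{\bm{\mu}}.
\]
It therefore suffices to establish the integral form of the \emph{pointwise} remainder $y^k(\bm{\mu}) - T_{\overline{\bm{\mu}}}^g(y^k,\bm{\mu})$ for each fixed center $\overline{\bm{\mu}} \in B$, and then substitute it into this expression.

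For the pointwise remainder I would pass to one dimension by restricting $y^k$ to the segment joining $\bm{\mu}$ and $\overline{\bm{\mu}}$. Since $\mathbb{P}$ is convex this segment lies in $\mathbb{P}$, so I can define the $\mathbb{X}$-valued function $\phi(s) := y^k(\overline{\bm{\mu}} + s(\bm{\mu} - \overline{\bm{\mu}}))$ for $s \in [0,1]$. The chain rule gives $\phi^{(j)}(s) = \sum_{|\alpha|=j} \frac{j!}{\alpha!}(\bm{\mu}-\overline{\bm{\mu}})^\alpha D^\alpha y^k(\overline{\bm{\mu}}+s(\bm{\mu}-\overline{\bm{\mu}}))$, so the one-dimensional Taylor expansion of $\phi$ about $s=0$ with integral remainder,
\[
\phi(1) = \sum_{j=0}^{g-1}\frac{\phi^{(j)}(0)}{j!} + \frac{1}{(g-1)!}\int_0^1 (1-s)^{g-1}\phi^{(g)}(s)\,{\rm d}s,
\]
identifies the partial sum with $T_{\overline{\bm{\mu}}}^g(y^k,\bm{\mu})$ through the multinomial collapse $\sum_{j=0}^{g-1}\sum_{|\alpha|=j}=\sum_{|\alpha|<g}$, and leaves a remainder built from the order-$g$ weak derivatives $D^\alpha y^k$.

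I would then perform the change of variables $z = 1-s$, which sends the integrand argument $\overline{\bm{\mu}} + s(\bm{\mu}-\overline{\bm{\mu}})$ to $\bm{\mu} + z(\overline{\bm{\mu}}-\bm{\mu})$ and turns $(1-s)^{g-1}$ into $z^{g-1}$, producing exactly the segment direction and weight in the statement. Substituting into the averaged expression and interchanging the $\overline{\bm{\mu}}$- and $z$-integrals yields the claimed formula for $R^{g,k}$. For $g=1$ the remainder reduces to $\sum_{\alpha=1}^{n_p}(\mu_\alpha - \overline{\mu}_\alpha)\int_0^1 D^\alpha y^k(\bm{\mu}+z(\overline{\bm{\mu}}-\bm{\mu}))\,{\rm d}z$, i.e. the fundamental theorem of calculus along the segment, matching the compact form; here I would keep careful track of the combinatorial factor $g$ coming from $g!/(g-1)!$, which is innocuous since only $g=1$ is needed in the sequel.

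The main obstacle is justifying these manipulations when $y$ is merely a Sobolev-class, $\mathbb{X}$-valued function rather than a classically smooth one. Because $y \in \mathbb{W}_2^{1,1}(I,\mathbb{P},\mathbb{X})$ possesses the relevant weak $\bm{\mu}$-derivatives only almost everywhere, the chain-rule and Taylor steps must be established first for smooth functions and then extended by a density argument, exploiting that $\eta \in C_0^\infty(\mathbb{P})$ and that the averaging over $B$ regularizes the restriction to almost every segment --- this is precisely the device by which \cite{bib:Brenner2008} makes the averaged Taylor polynomial well defined. The remaining technical care is the Fubini interchange between the $\overline{\bm{\mu}}$-integral over $B$ and the inner $z$-integral, which I would justify from the compact support of $\eta$ together with local integrability of $D^\alpha y^k$ on the bounded, convex parameter domain, treating all $\mathbb{X}$-valued integrals in the Bochner sense.
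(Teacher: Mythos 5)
The paper contains no proof of this proposition for you to be compared against: it is presented as background material, quoted ``for convenience and completeness'' from \cite[Ch.~4]{bib:Brenner2008}, with the burden of proof delegated entirely to that reference. Your argument is correct and is essentially the standard one from that source: express $R^{g,k}$ as the $\eta$-average of pointwise Taylor remainders using $\int_B \eta\,{\rm d}\overline{\bm{\mu}} = 1$, obtain each pointwise remainder from the one-dimensional Taylor theorem with integral remainder applied to $\phi(s) = y^k(\overline{\bm{\mu}} + s(\bm{\mu}-\overline{\bm{\mu}}))$ (the segment lies in $\mathbb{P}$ by convexity), collapse the chain-rule sums via multinomial coefficients, substitute $z = 1-s$ to produce the weight $z^{g-1}$ and the argument $\bm{\mu} + z(\overline{\bm{\mu}}-\bm{\mu})$, and then extend from smooth to Sobolev-class functions by density, with Fubini justifying the interchange of the $\overline{\bm{\mu}}$- and $z$-integrals and all $\mathbb{X}$-valued integrals read in the Bochner sense. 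One point you flagged deserves emphasis: the derivation genuinely produces the prefactor $g = g!/(g-1)!$ multiplying the sum over $|\alpha| = g$, which the displayed identity in the proposition omits; as you observe this is immaterial here because only $g = 1$ enters the proof of Theorem~\ref{th:errorbound}, but for $g > 1$ the formula as printed is off by exactly that factor, so your version with the factor tracked explicitly is the correct general statement.
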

The proposition above shows that the approximation error $R^{g,k}(\bm{\mu})$ is equal to the integral of the leading truncated Taylor polynomial terms between $\bm{\mu}$ and each $\overline{\bm{\mu}} \in B \subseteq \mathbb{P}$, weighted according to cut-off function $\eta$.
\begin{remark}
	The radii $\left\{\left\{ \rho_j^{1,k}\right\}_{k=1}^{n_t}\right\}_{j=1}^{n_s}$, $\left\{\left\{ \rho_j^{3,k}\right\}_{k=1}^{n_t}\right\}_{j=1}^{n_s}$ in \Cref{th:errorbound} can be taken as the limit to zero for functions that are continuous and differentiable almost everywhere. Doing this recovers the non-averaged Taylor polynomial-based error bound.
\end{remark}
Having defined the approximation error, we can prove Theorem~\ref{th:errorbound}.
\begin{proof}[Proof of Theorem \ref{th:errorbound}]
	Application of the triangle inequality and the Cauchy-Schwarz inequality results in
	\begin{equation}
		\label{eq:triangle_inequality}
		\begin{aligned}            
			\norm{y^k(\hat{\bm{\mu}}) - y_r^k(\hat{\bm{\mu}})}_{\mathbb{X}}^2 & \leq 3\underbrace{\norm{y^k(\hat{\bm{\mu}}) - \sum_{j=1}^{n_s} \omega_j y^k(\bm{\mu}_j)}_{\mathbb{X}}^2}_{\text{Term 1}}\\
			& + 3\underbrace{\norm{\sum_{j=1}^{n_s} \omega_j y^k(\bm{\mu}_j) - \sum_{j=1}^{n_s} \sum_{i=1}^{n_r} \langle \omega_j y^k(\bm{\mu}_j), \phi_i \rangle_{\mathbb{X}} \phi_i}_{\mathbb{X}}^2}_{\text{Term 2}}\\
			& + 3\underbrace{ \norm{\sum_{j=1}^{n_s} \sum_{i=1}^{n_r} \langle \omega_j y^k(\bm{\mu}_j), \phi_i \rangle_{\mathbb{X}} \phi_i - y_r^k(\hat{\bm{\mu}})}_{\mathbb{X}}^2}_{\text{Term 3}}.
		\end{aligned}
	\end{equation}
	Term 1 consists of the difference between $y^k(\hat{\bm{\mu}})$ and linear combinations (with weights $\left\{\omega_j \right\}_{j=1}^{n_s}$) of the solution snapshots of the same time step at $\left\{\bm{\mu}_j \right\}_{j=1}^{n_s}$. Term 2 consists of the weighted POD projection of the solution snapshots, whereas term 3 describes the difference between the POD-projected snapshots and the POD solution at $\hat{\bm{\mu}}$.  
	
	We start by bounding term 1 from above. We note that each snapshot is a zeroth-order Taylor polynomial for $y^k(\hat{\bm{\mu}})$. Hence the difference $y^k(\hat{\bm{\mu}}) - y^k(\bm{\mu}_j)$ is equal to 
	\begin{equation}
		R_j^{1,k}(\hat{\bm{\mu}}) = \int_{B_j^k} \eta (\overline{\bm{\mu}}) \left[ \sum_{\alpha = 1}^{n_p} (\hat{\mu}_{\alpha} - \overline{\mu}_{\alpha}) \int_{\hat{\bm{\mu}}}^{\overline{\bm{\mu}}} \frac{\partial}{\partial \overline{\mu}_{\alpha}} y^k\left(\bm{z}\right)\ {\rm d}\bm{z} \right]\ {\rm d}\overline{\bm{\mu}}.
	\end{equation}
	We pick $B_j^k = B(\bm{\mu}_j, \rho_j^k)$. Using the fact that the snapshot weights $\left\{\omega_j \right\}_{j=1}^{n_s}$ form a partition of unity then allows us to conclude that
	\begin{equation}
		\begin{aligned}
			y^k(\hat{\bm{\mu}}) - \sum_{j=1}^{n_s} \omega_j y^k(\bm{\mu}_j) & = \sum_{j=1}^{n_s} \omega_j \left(y^k(\hat{\bm{\mu}}) - y^k(\bm{\mu}_j) \right) = \sum_{j=1}^{n_s} \omega_j R_j^{1,k}(\hat{\bm{\mu}}).
		\end{aligned}
	\end{equation}
	Then, by the Cauchy-Schwarz inequality
	\begin{equation}
		\norm{y^k(\hat{\bm{\mu}}) - \sum_{j=1}^{n_s} \omega_j y^k(\bm{\mu}_j)}_{\mathbb{X}}^2 = \norm{\sum_{j=1}^{n_s} \omega_j R_j^{1,k}(\hat{\bm{\mu}})}_{\mathbb{X}}^2 \leq n_s \sum_{j=1}^{n_s} \omega_j^2 \norm{R_j^{1,k}}_{\mathbb{X}}^2.
	\end{equation}
	We note that $\hat{\mu}_{\alpha} - \overline{\mu}_{\alpha} \leq \hat{\mu}_{\alpha} - \mu_{j,\alpha} + \rho_j^k$ for all components $\alpha$, since $B_j^k$ is centered around $\bm{\mu}_j$. It follows that
	\begin{equation}
		\label{eq:term1_bound}
		\begin{aligned}
			\norm{R_j^{1,k}}_{\mathbb{X}}^2 \leq & \norm{\sum_{\alpha = 1}^{n_p} \left(\hat{\mu}_{\alpha} - \mu_{j,\alpha} + \rho_j^k \right) \int_{B_j^k} \eta (\overline{\bm{\mu}}) \left[ \int_{\hat{\bm{\mu}}}^{\overline{\bm{\mu}}} \frac{\partial}{\partial \overline{\mu}_\alpha} y^k\left(\bm{z}\right)\ {\rm d}\bm{z} \right]\ {\rm d}\overline{\bm{\mu}}}_{\mathbb{X}}^2\\
			\leq & n_p \sum_{\alpha = 1}^{n_p} \left(\hat{\mu}_{\alpha} - \mu_{j,\alpha} + \rho_j^k \right)^2 \norm{\int_{B_j^k} \eta (\overline{\bm{\mu}}) \left[ \int_{\hat{\bm{\mu}}}^{\overline{\bm{\mu}}} \frac{\partial}{\partial \overline{\mu}_\alpha} y^k\left(\bm{z}\right)\ {\rm d}\bm{z} \right]\ {\rm d}\overline{\bm{\mu}}}_{\mathbb{X}}^2 \\
			\leq & n_p \sum_{\alpha = 1}^{n_p} \left(\hat{\mu}_{\alpha} - \mu_{j,\alpha} + \rho_j^k \right)^2 c_{j,\alpha}^{k,1}.
		\end{aligned}
	\end{equation}   
	The third step in \eqref{eq:term1_bound} follows from the fact that $y^k$ and its derivatives are integrable on $\mathbb{P}$, allowing us to bound the $\mathbb{X}$-norm from above with the constants $c_{j,\alpha}^k > 0$. The result can be written as a weighted inner product
	\begin{equation}
		\begin{aligned}
			\norm{R_j^{1,k}}_{\mathbb{X}}^2 \leq & n_p \left(\hat{\bm{\mu}} - \bm{\mu}_j + \rho_j^k \bm{1} \right)^T\underbrace{\begin{bmatrix}
					c_{j,1}^{k,1} & & \\
					& c_{j,2}^{k,1} & \\
					& & \ddots
			\end{bmatrix}}_{=\bm{C}_j^{k,1}} \left(\hat{\bm{\mu}} - \bm{\mu}_j + \rho_j^k \bm{1} \right) \\
			\leq & n_p \norm{\hat{\bm{\mu}} - \bm{\mu}_j + \rho_j^k \bm{1}}_{\bm{C}_j^{k,1}}^2,
		\end{aligned}
	\end{equation}   
	where $\norm{\cdot }_{\bm{C}_j^{k,1}}$ is used to denote a norm weighted according to $\bm{C}_j^{k,1}$. We conclude from the preceding that term 1 is bounded from above as
	\begin{equation}
		\sum_{k=1}^{n_t} \omega^k \norm{y^k(\hat{\bm{\mu}}) - \sum_{j=1}^{n_s} \omega_j y^k(\bm{\mu}_j)}_{\mathbb{X}}^2
		\leq n_s n_p \sum_{k=1}^{n_t} \omega^k \sum_{j=1}^{n_s}  \omega_j^2 \norm{\hat{\bm{\mu}} - \bm{\mu}_j + \rho_j^k \bm{1}}_{\bm{C}_j^{k,1}}^2.
	\end{equation}
	
	As is well-documented in the literature (e.g., \cite[Remark~1.11]{bib:Gubisch2017}), term 2 is minimized by solving the eigenvalue problem
	\begin{equation}
		\label{eq:eigenvalueproblem}
		\bm{X}\bm{W}_{\hat{\bm{\mu}}} \bm{X}^T  \overline{\bm{\phi}}_i = \lambda_i \overline{\bm{\phi}}_i
	\end{equation}
	for the $n_r$ largest eigenvalues $\lambda_i$ and corresponding eigenvectors $\overline{\bm{\phi}}_i$. Here $\bm{X} = \bm{E}^{1/2}\bm{A}\bm{W}_k^{1/2}$, with $\bm{A}$ the snapshot matrix, $\bm{E} \in \mathbb{R}^{n_x \times n_x}$ the matrix corresponding to the inner product $\langle \cdot , \cdot \rangle_{\mathbb{X}}$, $\bm{W}_k \in \mathbb{R}^{n_s n_t \times n_s n_t}$ the weight matrix corresponding to the time integration (with entries $\left\{\omega^k\right\}_{k=1}^{n_t}$ on its diagonal), and $\bm{W}_{\hat{\bm{\mu}}} \in \mathbb{R}^{n_s n_t \times n_\omega n_t}$ the snapshot weight matrix with $\left\{\omega_j^2\right\}_{j=1}^{n_s}$ on its diagonal with $n_\omega$ the number of snapshots that have been assigned a nonzero weight. 
	Alternatively, the SVD of $\bm{X}_{\hat{\bm{\mu}}} = \bm{X} \bm{W}_{\hat{\bm{\mu}}}^{1/2}$ can be computed, as per \eqref{eq:standard_svd}.
	The largest $\text{min}(n_x, n_w n_t)$ singular values of $\bm{X}_{\hat{\bm{\mu}}}$ have a one-to-one correspondence to the absolute values of the largest eigenvalues of $\bm{X}_{\hat{\bm{\mu}}} \bm{X}_{\hat{\bm{\mu}}}^T$, such that $\lvert \lambda_i \rvert = \sigma_i^2$. The eigenvectors $\overline{\bm{\phi}}$ of $\bm{X}_{\hat{\bm{\mu}}}\bm{X}_{\hat{\bm{\mu}}}^T$ coincide with the left-singular vectors of $\bm{X}$ and are contained in its left-singular matrix $\bm{U}$. Term 2 is then bounded from above by the sum of the remaining eigenvalues, or equivalently, the squares of the remaining singular values, such that
	\begin{multline}
		\sum_{k=1}^{n_t} \omega^k \norm{\sum_{j=1}^{n_s} \omega_j y^k(\bm{\mu}_j) - \sum_{j=1}^{n_s} \sum_{i=1}^{n_r} \langle \omega_j y^k(\bm{\mu}_j), \phi_i \rangle_{\mathbb{X}} \phi_i}_{\mathbb{X}}^2\\
		\leq n_s \sum_{k=1}^{n_t} \omega^k \sum_{j=1}^{n_s} \omega_j^2 \norm{ y^k(\bm{\mu}_j) - \sum_{i=1}^{n_r} \langle y^k(\bm{\mu}_j), \phi_i \rangle_{\mathbb{X}} \phi_i}_{\mathbb{X}}^2
		\leq n_s \sum_{i = n_r + 1}^{n_s n_t} \sigma_i^2.
	\end{multline}
	
	For term 3, we remark that all terms are contained in $\text{span}\left\{\bm{\phi}_1, \ldots, \bm{\phi}_{n_r} \right\}$. Using the same partition of unity and averaged Taylor polynomial approach as for term 1, we find that
	\begin{equation*}
		\begin{aligned}
			\sum_{k=1}^{n_t} \omega^k \norm{\sum_{j=1}^{n_s} \sum_{i=1}^{n_r} \langle \omega_j y^k(\bm{\mu}_j), \phi_i \rangle_{\mathbb{X}} \phi_i - y_r^k(\hat{\bm{\mu}})}_{\mathbb{X}}^2 = & \sum_{k=1}^{n_t} \omega^k \norm{\sum_{j=1}^{n_s} \sum_{i=1}^{n_r} \langle \omega_j \tilde{R}_j^{1,k}, \phi_i \rangle_{\mathbb{X}} \phi_i}_{\mathbb{X}}^2 \\        
			\leq & n_s \sum_{k=1}^{n_t} \omega^k \sum_{j=1}^{n_s} \omega_j^2 \norm{\sum_{i=1}^{n_r} \langle \tilde{R}_j^{1,k}, \phi_i \rangle_{\mathbb{X}} \phi_i}_{\mathbb{X}}^2,
		\end{aligned}
	\end{equation*}
	which results in an upper bound that is of the same form as for term 1,
	\begin{multline}
		\sum_{k=1}^{n_t} \omega^k \norm{\sum_{j=1}^{n_s} \sum_{i=1}^{n_r} \langle \omega_j y^k(\bm{\mu}_j), \phi_i \rangle_{\mathbb{X}} \phi_i - y_r^k(\hat{\bm{\mu}})}_{\mathbb{X}}^2\\
		\leq n_s n_p \sum_{k=1}^{n_t} \omega^k \sum_{j=1}^{n_s} \omega_j^2 \norm{\hat{\bm{\mu}} - \bm{\mu}_j + \rho_j^{3,k} \bm{1}}_{\bm{C}_j^{k,3}}^2.
	\end{multline}
	Combining the upper bounds for terms 1, 2, and 3 through the triangle inequality of \eqref{eq:triangle_inequality} then results in \eqref{eq:PODerrorbound}.
\end{proof}
As shown in Theorem~\ref{th:errorbound}, the upper bound for the error between the HDM solution and its POD-Galerkin approximation at query point $\hat{\bm{\mu}}$ depends directly on the distances between $\hat{\bm{\mu}}$ and known snapshots at $\left\{\bm{\mu}_j \right\}_{j=1}^{n_s}$, as well as on the snapshot weights $\left\{\omega_j \right\}_{j=1}^{n_s}$. These snapshot weights can be chosen to (approximately) minimize the upper bound in Theorem~\ref{th:errorbound}. In the most general case, no information on the integration upper bound matrices $\bm{C}_j^{k,1}$ and $\bm{C}_j^{k,3}$, nor the ball radii $\rho_j^{1,k}$ and $\rho_j^{3,k}$ is available. In lieu of such information, we use the snapshot distance $2$-norm
\begin{equation}
	\delta_j(\hat{\bm{\mu}}) := \norm{\hat{\bm{\mu}} - \bm{\mu}_j}_2.
\end{equation}
We note that $\bm{\mu}_j \in \mathbb{P}$ is constant for a given $j$, while $\hat{\bm{\mu}}$ will vary depending on the current query point. We introduce
\begin{equation}
	\label{eq:distances}
	\begin{aligned}
		\delta_{\text{min}}(\hat{\bm{\mu}}) & := \min_{j = 1, \ldots, n_s} \delta_j(\hat{\bm{\mu}}), \\
		\delta_{\text{max}}(\hat{\bm{\mu}}) & := \max_{j = 1, \ldots, n_s} \delta_j(\hat{\bm{\mu}}), \\
		\hat{\delta} (c, \hat{\bm{\mu}}) & := c \delta_{\text{min}} (\hat{\bm{\mu}}) + (1-c) \delta_{\text{max}} (\hat{\bm{\mu}}),
	\end{aligned}
\end{equation}
where $\hat{\delta}$ is defined as linear combination of the minimum and maximum snapshot distances. We use $\hat{\delta}$ as cutoff distance: Any snapshot with a distance greater than $\hat{\delta}$ is assigned a weight of zero. With these definitions we can define versatile weighting functions that depend on only a single tunable parameter, $c \in (0, 1]$. In this work we focus on using a piecewise cubic polynomial weight function, in order to make the weights change in a continuous and differentiable manner. We did not observe a noticeable impact on the results presented in this work when using different polynomial weight functions, e.g. linear or quadratic. The general form of a suitable cubic polynomial $\omega \in C^1\left([\delta_{\text{min}}, \infty)\right)$ is
\begin{equation}
	\omega \left(\delta_j(\hat{\bm{\mu}})\right) := \begin{cases}
		1, & \text{if}\ \delta_j(\hat{\bm{\mu}}) = \delta_{\text{min}}(\hat{\bm{\mu}}), \\
		c_1 \delta_j^3(\hat{\bm{\mu}}) + c_2 \delta_j^2(\hat{\bm{\mu}}) + c_3 \delta_j(\hat{\bm{\mu}}) + c_4, & \text{if}\ \delta_{\text{min}}(\hat{\bm{\mu}}) < \delta(\hat{\bm{\mu}}) < \hat{\delta}(c,\hat{\bm{\mu}}), \\
		0, & \text{if}\ \delta(\hat{\bm{\mu}}) \geq \hat{\delta}(c, \hat{\bm{\mu}}).
	\end{cases}
\end{equation}
The coefficients $c_1$ through $c_4$ are uniquely defined with the conditions $\omega\left(\delta_{\text{min}}(\hat{\bm{\mu}})\right) = 1$, $\frac{{\rm d}\omega}{{\rm d}\delta }\left(\delta_{\text{min}}(\hat{\bm{\mu}})\right) = \omega\left(\hat{\delta}(c,\hat{\bm{\mu}})\right) = \frac{{\rm d}\omega}{{\rm d}\delta}\left(\hat{\delta}(\hat{\bm{\mu}})\right) = 0$. The snapshot weights $\left\{\omega_j \right\}_{j=1}^{n_s}$ follow with
\begin{equation}
	\omega_j(\hat{\bm{\mu}}) := \frac{\omega\left(\delta_j(\hat{\bm{\mu}})\right)}{\sum_{i=1}^{n_s} \omega\left(\delta_i(\hat{\bm{\mu}})\right)},
\end{equation}
which guarantees that the snapshot weights form a partition of unity. We calculate the derivatives of the weights with respect to changes in $\hat{\bm{\mu}}$. It follows from the chain rule that
\begin{equation}
	\label{eq:weightfunction_deriv}
	\frac{{\rm d} \omega_j}{{\rm d} \hat{\bm{\mu}}}(\hat{\bm{\mu}}) = \frac{\sum_{i=1}^{n_s} \omega\left(\delta_i(\hat{\bm{\mu}})\right) - \omega\left(\delta_j(\hat{\bm{\mu}})\right)}{\left( \sum_{i=1}^{n_s} \omega\left(\delta_i(\hat{\bm{\mu}})\right)\right)^2}\frac{\left(3c_1 \delta_j^2(\hat{\bm{\mu}}) + 2c_2 \delta_j(\hat{\bm{\mu}}) + c_3\right)}{\delta_j(\hat{\bm{\mu}})}(\hat{\bm{\mu}} - \bm{\mu}_{j}),
\end{equation}
with $\frac{{\rm d} \omega_j}{{\rm d} \hat{\bm{\mu}}}(\hat{\bm{\mu}}) \in \mathbb{R}^{n_p}$. We are interested in the derivative of $\omega_j$ in the direction $(\hat{\bm{\mu}} - \bm{\mu}_j)$. We further reduce the derivative vector to a single scalar variable by multiplying with the unit-norm vector in this direction.

\subsection{Efficiently applying snapshot weighting}
\label{subsec:snapshot_weighting}
As shown in the proof of Theorem~\ref{th:errorbound}, we weigh the columns of snapshot matrix $\bm{X}$ with the weights $\left\{\omega_j \right\}_{j=1}^{n_s}$ that populate $\bm{W}_{\hat{\bm{\mu}}}$ in order to tighten the POD approximation error. By leveraging \cref{alg:svd_weighted} we can efficiently compute the SVD of $\bm{X}_{\hat{\bm{\mu}}} = \bm{X}\bm{W}_{\hat{\bm{\mu}}}^{1/2}$ once we have the SVD of $\bm{X}$. Thus, computing the reduced basis corresponding to a specific set of snapshot weights is an additional step that follows after computing the global reduced basis. We reduce the size of $\bm{W}_{\hat{\bm{\mu}}}$ from $n_s n_t \times n_s n_t$ to $n_s n_t \times n_w n_t$, with $n_w < n_s$ the number of non-zero snapshot weights. In other words, columns corresponding to zero snapshot weights are removed from $\bm{W}_{\hat{\bm{\mu}}}$; we can do this without loss of accuracy or generality. Since the snapshots for which $\delta_j  \in [\delta_{\text{min}}, \hat{\delta})$ are the only ones that are assigned a nonzero weight, $n_w$ can be much smaller than $n_s$, depending on the snapshot distribution in $\mathbb{P}$. The cost of \Cref{alg:svd_weighted} is equal to the cost of three matrix multiplications plus the cost of computing the SVD of a matrix of size $n_s n_t \times n_w n_t$, which is $\mathcal{O}\left(n_s n_t (n_w n_t)^2\right)$. The cost of this SVD is thus independent of the number of HDM degrees of freedom $n_x$.

\begin{algorithm}
	\caption{Computing SVD of Weighted Snapshot Matrix} 
	\label{alg:svd_weighted}
	\begin{algorithmic}[1]
		\REQUIRE{SVD of snapshot matrix $\bm{X} = \bm{U}_{\bm{X}} \bm{\Sigma}_{\bm{X}} \bm{V}_{\bm{X}}^T \in \mathbb{R}^{n_x \times n_s n_t}$, weight matrix $\bm{W}_{\hat{\bm{\mu}}} \in \mathbb{R}^{n_s n_t \times n_w n_t}$}
		\ENSURE{SVD of weighted snapshot matrix $\bm{X}_{\hat{\bm{\mu}}} = \bm{X} \bm{W}_{\hat{\bm{\mu}}}^{1/2} = \bm{U} \bm{\Sigma} \bm{V}^T$}
		\STATE{Compute $\bm{D} := \bm{\Sigma}_{\bm{X}} \bm{V}_{\bm{X}}^T \bm{W}_{\hat{\bm{\mu}}}^{1/2}$}
		\STATE{Compute SVD: $\bm{D} = \bm{U}_{\bm{D}} \bm{\Sigma}_{\bm{D}} \bm{V}_{\bm{D}}^T$}
		\STATE{Set $\bm{V} := \bm{V}_{\bm{D}}$, $\bm{\Sigma} := \bm{\Sigma}_{\bm{D}}$, $\bm{U} := \bm{U}_{\bm{X}} \bm{U}_{\bm{D}}$}
	\end{algorithmic}
\end{algorithm}

\subsection{Applying parametric state derivatives}
\label{subsec:state_sensitivities}
Various works have shown that the derivatives of the state with respect to the design parameters can be used to make the reduced basis more robust when varying said parameters. First, the parametric snapshot derivatives can be added to the snapshot matrix \eqref{eq:snapshotmatrix}. Taylor expansion-based weight factors can be applied to them \cite{bib:Carlberg2008}, or each snapshot and derivative vector can be divided by the mean $L^2$ norm of the corresponding HDM solution snapshot \cite{bib:Schmidt2013}. Alternatively, the parametric derivatives can be collected in a separate snapshot matrix, and the SVD can be used to find their dominant modes directly \cite{bib:Zahr2015}. These can be appended to the dominant modes of the HDM solution snapshot matrix. Another option is to directly include the parametric derivatives in the reduced basis. Gram-Schmidt orthogonalization can then be applied to a matrix that also includes the state and adjoint-reduced bases \cite{bib:Wen2023} to arrive at an orthonormal basis.

In this work, we focus on the approach of Hay et al. \cite{bib:Hay2009}. Rather than adding the snapshot derivatives to $\bm{\Phi}$ or any snapshot matrix, they are instead used to compute parametric derivatives of the reduced basis vectors that make up $\bm{\Phi}$. The authors introduce two ways of using the parametric derivatives of the reduced basis vectors. In one approach, the derivatives are used to construct a first-order Taylor series extrapolation of the reduced basis, whereas in the other approach, the parametric derivatives of the leading reduced basis vectors are concatenated with the reduced basis itself. The performance of these two approaches is comparable when the reduced basis dimension is kept constant \cite{bib:Hay2009, bib:Jarvis2014}. The extrapolation approach requires computing the parametric derivatives of all $n_r$ columns of $\bm{\Phi}$, as opposed to only $n_r/2$ for the concatenation approach. Because of this, we focus here on the concatenation approach. The methods outlined in \cite{bib:Hay2009} are applicable only to one-dimensional parameter spaces and make use of HDM solution data at only a single parameter value. 

We generalize the aforementioned concatenation approach to efficient reduced basis derivative computations with HDM snapshots at multiple parameter values and multidimensional parameter spaces. To avoid having to compute reduced basis derivatives with respect to every design parameter, we determine for each solution snapshot $\bm{A}_j(\bm{\mu}_j)$ its derivative in the direction $\hat{\bm{\mu}} - \bm{\mu}_j$ pointing toward the new query point $\hat{\bm{\mu}}$. This approximation lets us construct a single snapshot derivative matrix independent of the dimension of $\mathbb{P}$. We use unit-norm vectors $\bm{\tau}^{(j)} = (\hat{\bm{\mu}} - \bm{\mu}_j) / \delta_j (\hat{\bm{\mu}})$ and multiply these with the derivatives $\frac{{\rm d} \bm{A}_j}{{\rm d}\mu_i}(\bm{\mu}_j) \in \mathbb{R}^{n_x \times n_t}$. We furthermore multiply with $\delta_j(\hat{\bm{\mu}})$ in order to normalize to $[0,1]$ the length of the line that connects $\bm{\mu}_j$ and $\hat{\bm{\mu}}$. The derivative of $\bm{A}_j(\bm{\mu}_j)$ in the direction $\hat{\bm{\mu}} - \bm{\mu}_j$ is
\begin{equation}
	\frac{{\rm d} \bm{A}_j}{{\rm d} (\hat{\bm{\mu}} - \bm{\mu}_j)}(\bm{\mu}_j) := \sum_{i=1}^{n_p}\frac{{\rm d} \bm{A}_j}{{\rm d} \mu_i}(\bm{\mu}_j) \tau_i^{(j)}\delta_j(\hat{\bm{\mu}}) = \sum_{i=1}^{n_p}\frac{{\rm d} \bm{A}_j}{{\rm d} \mu_i}(\bm{\mu}_j) \left[\hat{\mu}_i - \mu_{j,i}\right]
\end{equation}
for $j = 1, \dots, n_s$. In general, not every snapshot will have derivative data associated to it. Hence, we use only those snapshots for which we have accompanying derivative data. The snapshot weights $\left\{\omega_j \right\}_{j=1}^{n_s}$ are also computed with this reduced set of $n_{s,d}$ snapshots, such that other snapshots are always assigned a weight of zero. We combine the derivatives of multiple snapshots into the matrix
\begin{equation}
	\label{eq:snapshotderivmat}
	\frac{{\rm d}\bm{A}}{{\rm d}\hat{\bm{\mu}}}(\hat{\bm{\mu}}) := \begin{bmatrix} \frac{{\rm d} \bm{A}_1}{{\rm d} (\hat{\bm{\mu}} - \bm{\mu}_1)} & \frac{{\rm d} \bm{A}_2}{{\rm d} (\hat{\bm{\mu}} - \bm{\mu}_2)} & \cdots & \frac{{\rm d} \bm{A}_{n_s}}{{\rm d} (\hat{\bm{\mu}} - \bm{\mu}_{n_{s,d}})}\end{bmatrix}.
\end{equation}
We can thus readily recompute $\frac{{\rm d}\bm{A}}{{\rm d}\hat{\bm{\mu}}}(\hat{\bm{\mu}}) \in \mathbb{R}^{n_x\times n_s n_t}$ every time $\hat{\bm{\mu}}$ is updated. Recall that the reduced basis is computed through the SVD of the weighted snapshot matrix $\bm{X}_{\hat{\bm{\mu}}} = \bm{X} \bm{W}_{\hat{\bm{\mu}}}^{1/2}$. Applying the product rule,
\begin{equation}
	\label{eq:AW_productrule}
	\frac{{\rm d} \bm{X}_{\hat{\bm{\mu}}}}{{\rm d} \hat{\bm{\mu}}} = \bm{E}^{1/2}\frac{{\rm d} \bm{A}}{{\rm d} \hat{\bm{\mu}}} \bm{W}_k^{1/2}\bm{W}_{\hat{\bm{\mu}}}^{1/2} + \bm{E}^{1/2}\bm{A}\bm{W}_k^{1/2} \frac{{\rm d} \bm{W}_{\hat{\bm{\mu}}}^{1/2}}{{\rm d} \hat{\bm{\mu}}},
\end{equation}
where $\frac{{\rm d}\bm{A}}{{\rm d}\hat{\bm{\mu}}}$ is as in \eqref{eq:snapshotderivmat} and $\frac{{\rm d}\bm{W}_{\hat{\bm{\mu}}}^{1/2}}{{\rm d}\hat{\bm{\mu}}}$ follows from the derivative $\frac{{\rm d}\omega}{{\rm d}\hat{\bm{\mu}}}$ given in \eqref{eq:weightfunction_deriv}. The matrices $\frac{{\rm d} \bm{X}_{\hat{\bm{\mu}}}}{{\rm d} \hat{\bm{\mu}}}$ and $\bm{X}_{\hat{\bm{\mu}}}$ are the inputs to the algorithm to compute the reduced basis derivatives, as covered in \cite{bib:Hay2009}. This algorithm works by taking the derivative of eigenvalue problem \eqref{eq:eigenvalueproblem}; each reduced basis derivative is computed at the cost of solving a linear system of size $n_{w,d} n_t \times n_{w,d} n_t$, where $n_{w,d}$ is the number of snapshots with derivative information that are assigned a nonzero weight.

As shown in \cite{bib:Hay2009}, the derivatives $\frac{{\rm d}\bm{\phi}_i}{{\rm d}\hat{\bm{\mu}}}$ are orthogonal to $\bm{\phi}_i$. The changes to $\bm{\phi}_i$ that occur when moving away from  $\hat{\bm{\mu}} \in \mathbb{P}$ are thus not contained in the subspace spanned by the POD basis matrix $\bm{\Phi}$ that is constructed at $\hat{\bm{\mu}}$. Adding the derivatives of the reduced basis vectors to the reduced basis can thus make $\bm{\Phi}$ more accurate in a neighborhood around $\hat{\bm{\mu}}$. To obtain an enhanced reduced basis of dimension $n_r$ we retain the first $n_r / 2$ singular vectors; the remaining reduced basis dimensions are populated with the derivatives of these first $n_r / 2$ singular vectors. Lastly, we normalize the length of each reduced basis derivative vector before adding it to $\bm{\Phi}$.

\section{Optimization problems with POD}
\label{sec:optimization_with_POD}
The general form of PDE-constrained optimization problems after discretization can be stated as \begin{equation}
	\begin{aligned}
		\text{minimize} & & J(\bm{a}, \bm{\mu}) & \in \mathbb{R}\\
		\text{with respect to} & & \bm{a} & \in \mathbb{R}^{n_x n_t} \\
		& & \bm{\mu} & \in \mathbb{P} \subseteq \mathbb{R}^{n_p}\\
		\text{subject to} & & \bm{r}(\bm{a},\bm{\mu}) & = \bm{0} \\
		& & \bm{c}(\bm{a},\bm{\mu}) & \geq \bm{0}. \\
	\end{aligned}
	\label{eq:general_optimization}
\end{equation}
Suppose $J: \mathbb{R}^{n_x n_t} \times \mathbb{P} \rightarrow \mathbb{R}$ is an objective function that depends on the state variables $\bm{a} = \begin{bmatrix}
	\left(\bm{a}^1\right)^T & \cdots & \left(\bm{a}^{n_t}\right)^T
\end{bmatrix}^T \in \mathbb{R}^{n_x n_t}$ after discretization and the design parameters $\bm{\mu} \in \mathbb{P}$. We aim to minimize $J$ by varying the design parameters $\bm{\mu}$. Suppose that $\bm{r}(\bm{a},\bm{\mu}) = \begin{bmatrix}
	\left(\bm{r}^1(\bm{a}^1,\bm{\mu})\right)^T & \cdots & \left(\bm{r}^{n_t}(\bm{a}^{n_t},\bm{\mu})\right)^T
\end{bmatrix}^T \in \mathbb{R}^{n_x n_t}$ consists of the stacked residual vectors of all time steps.

The optimization problem \eqref{eq:general_optimization} can be solved in various ways. A broad overview of monolithic and distributed optimization architectures for single- and multidisciplinary design optimization is given in \cite{bib:Martins2013b}. We use a reduced-space approach in this work, in which the optimization algorithm and PDE solver are decoupled. The PDE solver is used to drive the residuals $\bm{r}$ to zero each time a new solution is requested by the optimization algorithm. This in contrast to full-space optimization approaches, wherein the optimization problem is expanded to include the PDE state variables as optimization parameters. The works \cite{bib:Joshy2023, bib:Joshy2021} unify these two optimization approaches and describe their respective advantages and downsides in more detail.

In reduced-space approaches, the optimization problem is solved by iteratively generating new query points $\bm{\mu}_i$ along a one-dimensional search direction. Newton's method is then used by the PDE solver to compute $\bm{r}(\bm{a}_i,\bm{\mu}_i) = \bm{0}$, which defines the new system state $\bm{a}_i$. The corresponding objective and active constraint function values follow accordingly, after which the optimization algorithm either chooses a new query point along the same search direction or picks a new search direction. In this work, we focus on gradient-based optimization methods, where the optimization algorithm uses derivative information to decide on new search directions. We use the OpenMDAO framework \cite{bib:Gray2019} with the sequential least-squares programming (SLSQP) gradient-based optimization algorithm \cite{bib:Kraft1988}. We use analytic derivative calculation methods \cite{bib:Martins2013}. When using POD we enforce
\begin{equation}
	\label{eq:POD_residual}
	\bm{\Phi}^T \bm{r}(\bm{\Phi}\bm{\Phi}^T\bm{a},\bm{\mu}) = \bm{0} 
\end{equation}
instead of enforcing $\bm{r}(\bm{a},\bm{\mu}) = \bm{0}$. The analytic derivative calculation methods that correspond to \eqref{eq:POD_residual} are derived in \cite{bib:Zahr2015}. While \cite{bib:Zahr2015} does not take into account any parameter dependence of $\bm{\Phi}$, computing the derivatives of $\bm{\Phi}$ with respect to all $n_p$ design parameters is not computationally tractable for high-dimensional parameter spaces. Hence, we do not include them in this work.

The cost of computing the required gradient information for all required outputs scales with the number of design parameters $n_p$ for the direct method, or the number of outputs $n_o$ for the adjoint method. The choice to use either method thus depends on the number of design parameters and outputs. We remark that the weighted POD approach proposed here is applicable to gradient-free optimization as well.

As was covered in \cref{subsec:state_sensitivities}, various works that use POD leverage the state derivatives $\frac{{\rm d}\bm{a}}{{\rm d}\bm{\mu}}$ as part of the process to construct $\bm{\Phi}$. These works focus on low-to-medium-dimensional parameter spaces, where the cost of the direct method is tractable even when the adjoint approach would be more efficient. While the direct method can be used in a reasonably efficient manner by resorting to multiple-right-hand-side solution approaches \cite{bib:Carlberg2008, bib:Carlberg2009, bib:Carlberg2011}, computing all $n_p$ state derivative vectors for large $n_p$ is significantly more expensive than using the adjoint method. Whether doing so is useful depends on how much the state derivative vectors aid the accuracy of any reduced model that might be used, and how often these state derivatives would need to be computed. 

\Cref{fig:pod_in_optimization_flowchart} shows our approach to deploying the POD approaches in an optimization problem. The dashed blocks are used by both weighted POD approaches, while the dotted block is used by the POD approach that uses state derivatives as covered in \cref{subsec:state_sensitivities}. We use a residual-based approach for \textit{a posteriori} error estimation each time a query point is simulated with POD. These are common in literature: The work \cite{bib:Feng2024} reviews various such error estimation methods for projection-based reduced models applied to parametric systems. We estimate the accuracy of the POD solution $\bm{a}_r^k$ in each time step by looking at the maximum $\mathbb{X}$-norm (over all time steps) of residual vector $\bm{r}^k$ per degree of freedom $n_x$ of the HDM. We define the residual convergence threshold $\epsilon_{\bm{r}} > 0$. A POD solution is accepted when $\epsilon_{\bm{r}} \geq \max \left\{||\bm{r}^k(\bm{\Phi}\bm{a}_{r}^k, \bm{\mu}) ||_{\mathbb{X}}/n_x: k = 1, 2, \ldots, n_t \right\}$, and rejected otherwise. If the POD solution is rejected, we rerun the simulation with the HDM. The SVD of the resulting HDM solution is computed, truncated if necessary, and appended to the SVD of the snapshot matrix with \Cref{alg:svd_update}. After possibly computing the required derivatives for the optimization algorithm, a new query point is chosen and a new iteration starts.

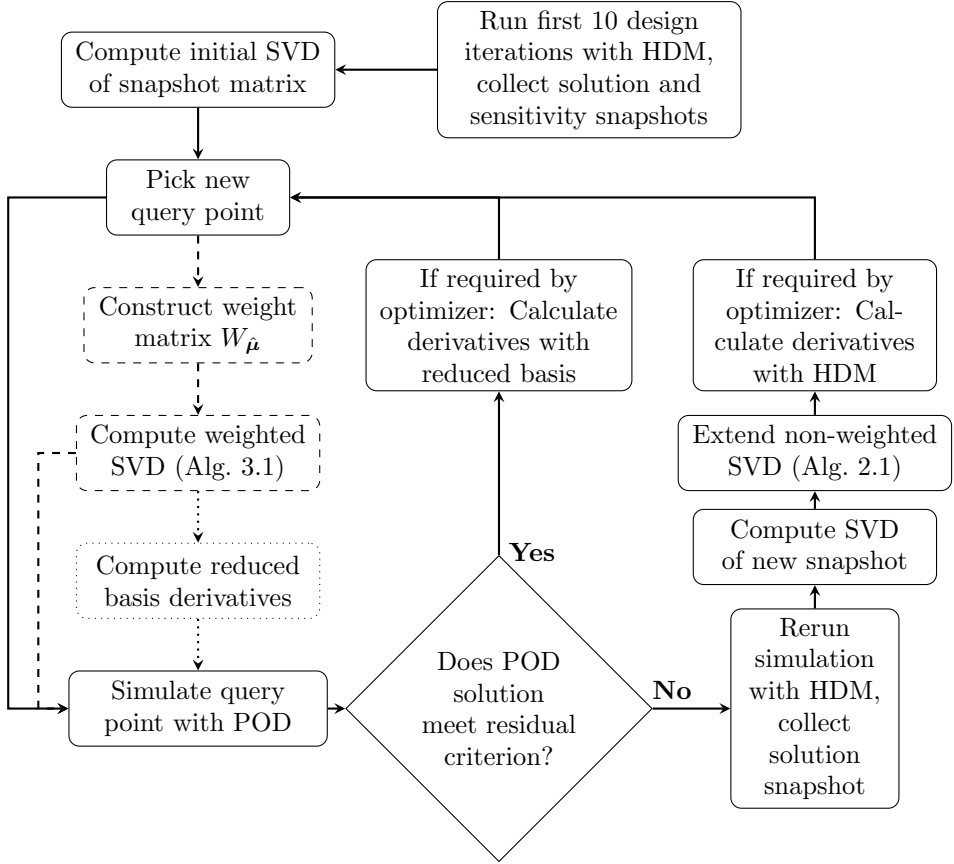
\begin{figure}[tbhp]
	\centering
	\begin{tikzpicture}[node distance=1.7cm]
		\node (start) [process, text width=3.8cm] {Run first $10$ design iterations with HDM, collect solution and sensitivity snapshots};
		\node (initial_SVD) [process, left of=start, text width=3.4cm, xshift=-3.5cm] {Compute initial SVD of snapshot matrix};
		\node (new_design) [process, below of=initial_SVD, text width=2.2cm] {Pick new query point};
		\node (weight_matrix) [process, below of=new_design, text width=2.8cm, dashed] {Construct weight matrix $W_{\hat{\bm{\mu}}}$};
		\node (weighted_SVD) [process, below of=weight_matrix, text width=3cm, dashed] {Compute weighted SVD (Alg.~\ref{alg:svd_weighted})};
		\node (RB_derivatives) [process, below of=weighted_SVD, text width=3cm, dotted] {Compute reduced basis derivatives};
		\node (POD) [process, below of=RB_derivatives, text width=3.2cm] {Simulate query point with POD};
		\node (converged) [decision, right of=POD, text width=2.1cm, xshift=2.3cm] {Does POD solution meet residual criterion?};
		\node (calc_derivs_rb) [process, text width=3.3cm] at (weight_matrix -| converged) {If required by optimizer: Calculate derivatives with reduced basis};
		
		\node (rerun_with_FM) [process, right of=converged, text width=2cm, xshift=2.5cm] {Rerun simulation with HDM, collect solution snapshot};
		
		\node (SVD_snapshot) [process, above of=rerun_with_FM, text width=3cm, yshift=0.45cm] {Compute SVD of new snapshot};
		\node (extend_SVD) [process, text width=3.4cm] at (weighted_SVD -| SVD_snapshot) {Extend non-weighted SVD (Alg.~\ref{alg:svd_update})};
		\node (calc_derivs) [process, text width=3cm] at (weight_matrix -| extend_SVD) {If required by optimizer: Calculate derivatives with HDM};
		
		\draw [arrow] (start) -- (initial_SVD);
		\draw [arrow] (initial_SVD) -- (new_design);
		\draw [arrow, dashed] (new_design) -- (weight_matrix);
		\draw [arrow, dashed] (weight_matrix) -- (weighted_SVD);
		\draw [arrow, dotted] (weighted_SVD) -- (RB_derivatives);
		\draw [arrow, dotted] (RB_derivatives) -- (POD);
		\draw [arrow] (POD) -- (converged);
		\draw [arrow] (converged) -- node [above, near start] {\textbf{No}} (rerun_with_FM);
		
		\draw [arrow] (rerun_with_FM) -- (SVD_snapshot);
		\draw [arrow] (SVD_snapshot) -- (extend_SVD);
		\draw [arrow] (extend_SVD) -- (calc_derivs);
		\draw [arrow] (calc_derivs) |- (new_design);
		\draw [arrow] (converged) -- node [right, near start, yshift=-0.5cm] {\textbf{Yes}} (calc_derivs_rb);
		\draw [arrow] (calc_derivs_rb) |- (new_design);
		\draw [arrow] (new_design.west) -- ++ (-1.3cm,0) |- (POD.west);
		\draw [arrow, dashed] (weighted_SVD.west) -- ++ (-0.5cm,0) |- (POD.west);
	\end{tikzpicture}    
	\caption{Overview of POD deployment while running optimization algorithm.}
	\label{fig:pod_in_optimization_flowchart}
\end{figure}

\section{Application: Dynamic shell thickness optimization}
\label{sec:numerical_experiments}

We use the methods outlined in the previous sections to minimize the compliance in a dynamic Reissner\textendash Mindlin (RM) shell model that is implemented in FEniCSx \cite{bib:Baratta2023}. This finite element shell model is used in \cite{bib:Xiang2024} to carry out time-dependent coupled aero-elastic simulations; a static version is used and compared to other shell and beam models in \cite{bib:Sarojini2024}. We consider the cantilevered shell of $10$ meters long and $2$ meters wide that is shown in \cref{fig:shell}. The shell is initially at rest, before being subjected to a constant pressure $\bm{f}$ in the negative vertical direction; no other loading is applied. This pressure force causes the shell to deflect in an oscillatory manner. The implicit midpoint rule is used, with time step size $\Delta t = 0.1$ seconds over a time domain of $20$ seconds; thus $n_t = 200$. At each time step, the shell model computes the degrees of freedom of the deformed state, $\bm{a}^k$, consisting of displacements $\bm{u}$ and rotations $\bm{\theta}$ such that $\bm{a}^k = \begin{bmatrix}
	(\bm{u}^k)^T & (\bm{\theta}^k)^T
\end{bmatrix}^T$, discretized with linear and quadratic basis functions respectively. We use a mesh with $20 \times 4$ quadrilateral cells, leading to $1{,}422$ degrees of freedom per time step. We use residual criterion $\epsilon_{\bm{r}} = 10^{-1}$ for each method, and we use $\mathbb{X} = H^1(\Omega)$ for the residual vector norm and POD basis construction. The $\mathbb{W}_2^1(I,\mathbb{X})$ norm thus consists of the (approximately) time-integrated squares of the instantaneous $H^1$ norms. All simulations are run on a laptop with an Intel Core i7-13620H processor and 16 GBs of RAM. 

\begin{figure}
	\centering
	\includegraphics[trim={380 185 230 270}, clip, width=0.7\linewidth]{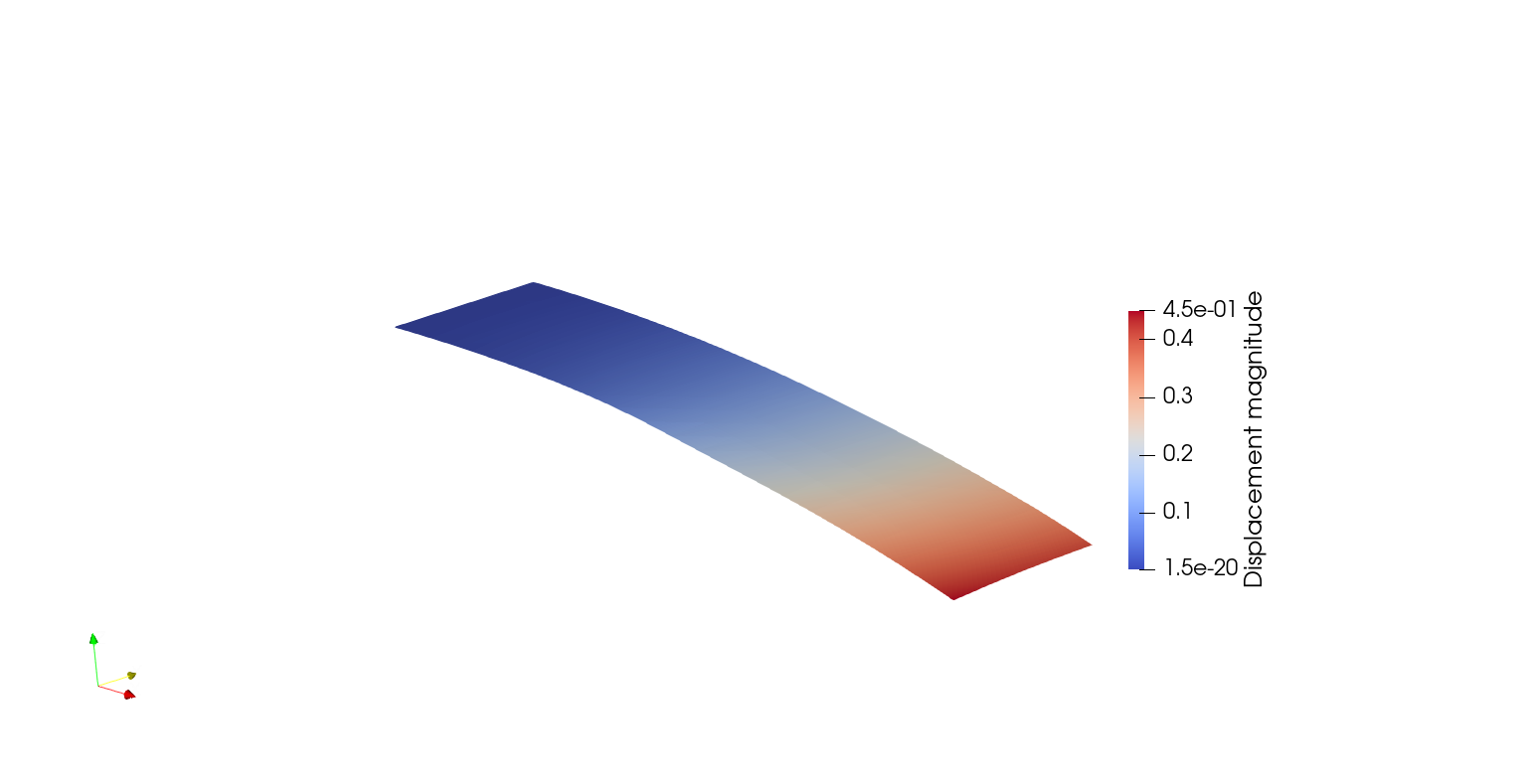}
	\caption{Cantilevered shell, deformation shown scaled with a factor of three}
	\label{fig:shell}
\end{figure}

\subsection{Optimization problem definition}
The objective of the optimization is to minimize the time integral, approximated with the trapezoid rule, of the instantaneous compliances $J^k$ over shell surface $\Omega$ over all time steps:
\begin{equation}
	J(\bm{a}, \bm{\mu}) := \int_0^T \int_\Omega \bm{u}(t, \bm{\mu})^T\cdot \bm{f}\ {\rm d}\Omega\ {\rm d}t \approx \sum_{k=1}^{n_t} \frac{\Delta t}{2} \int_\Omega \left(\bm{u}^{k-1}+\bm{u}^k\right)^T\cdot \bm{f}\ {\rm d}\Omega,
\end{equation}
where $\bm{u}^0$ is the displacement of the initial (undeformed) shell configuration. The shell thickness function is expanded in a linear function basis on the finite element mesh; we use the thickness degrees of freedom as design parameters, resulting in a total of $105$ design parameters for the $20 \times 4$ quadrilateral mesh. We impose a shell-volume equality constraint to prevent the optimizer from indefinitely increasing the shell thickness to minimize compliance: Starting from a uniform shell thickness of $0.1$ meters, we keep the shell volume $v: \mathbb{P} \rightarrow \mathbb{R}$ constant. Each thickness design parameter is allowed to vary within the range $\left[ 0.02, 0.2\right]$. This leads to the reduced-space optimization problem
\begin{equation}
	\begin{aligned}
		\text{minimize} & & J(\bm{a}(\bm{\mu}), \bm{\mu}) & \\
		\text{with respect to} & & \bm{\mu} & \in [0.02, 0.2]^{105} \\
		\text{subject to} & & v(\bm{\mu}) &= v(\bm{\mu}_0), \\
	\end{aligned}
\end{equation}
where $\bm{a}$ is treated as an implicit function of $\bm{\mu}$, and $\bm{\mu}_0$ is the initial design with a uniform thickness of $0.1\ \text{meters}$. When calculating derivatives for the optimization algorithm, there is only one output that depends on the PDE constraint, meaning that the adjoint method requires solving only a single linear system per time step. In contrast, using the direct method requires solving a linear system per time step for each thickness design parameter. The state sensitivities necessary for the POD approach outlined in \cref{subsec:state_sensitivities} are computed by inverting the matrix $\frac{\partial \bm{r}}{\partial \bm{a}} \in \mathbb{R}^{n_x \times n_x}$, as covered in \cref{sec:optimization_with_POD}. This is tractable for this linear time-dependent simulation since the matrix $\frac{\partial \bm{r}}{\partial \bm{a}}$ has to be inverted only once for each simulation, independent of the number of design parameters and time steps.

\subsection{POD approaches}
In order to compare the performance of the weighted POD methods that are proposed in this work we consider four POD approaches:
\begin{enumerate}
	\item a global reduced basis;
	\item interpolation in the tangent space to the Grassmann manifold \cite{bib:Amsallem2008};
	\item a reduced basis that leverages snapshot weighting, as per \cref{subsec:weighted_POD};
	\item a reduced basis that uses snapshot weighting as per \cref{subsec:weighted_POD}, and reduced basis derivatives as per \cref{subsec:state_sensitivities}.
\end{enumerate}
We use only the HDM for the first $10$ model evaluations of each optimization run, after which we start using the POD models. We use $c = 0.8$ for the cutoff distance $\hat{\delta}$ in \eqref{eq:distances} for all weight functions in the snapshot weighting approaches. For the Grassmann manifold interpolation, we use Gaussian radial basis function (RBF) interpolation through the SMT toolkit \cite{bib:Saves2024} with covariance kernel
\begin{equation}
	k_{\text{cov}}(\bm{\mu}_i, \bm{\mu}_j, \kappa) := \exp\left( -\frac{\norm{\bm{\mu}_i - \bm{\mu}_j}_2^2}{\kappa^2}\right).
\end{equation}
We only report here Grassmann manifold results for $\kappa = 0.1$; these were found to result in the most accurate reduced model. For all box plots, we truncate the whiskers at the $10^{\text{th}}$ and $90^{\text{th}}$ percentiles.

\subsection{Performance testing at identical query points}
\label{subsec:baseline_accuracytest}
We first solve the optimization problem with the HDM. This serves as a baseline. Following the initial $10$ HDM evaluations, we run every POD approach before each HDM evaluation, and record the corresponding POD approximation errors. This allows us to compare the accuracy of the POD approaches when using the same snapshot data. After each HDM evaluation (or derivative evaluation), we collect the corresponding data and use it to update the POD models. 

\Cref{fig:error_boxplots_followingFOM} shows box plots of the resulting relative POD approximation errors corresponding to each method for reduced basis sizes $n_r \in \left\{30, 40, 50, 60 \right\}$ with the global and weighted POD approaches. The corresponding Grassmann reduced basis size is constant and equal to $n_r = 15$. We use the $\mathbb{W}_2^1(I,\mathbb{X})$ norm that is defined in \eqref{eq:time_integral_norm} to quantify the POD approximation error, and normalize it with the $\mathbb{W}_2^1(I,\mathbb{X})$ norm of the corresponding HDM solution. The Grassmann manifold performs comparably to the global POD basis with $n_r = 50$, although its approximation errors show more variance. Both approaches are outperformed by the weighted POD approaches with and without use of reduced basis derivatives: Applying snapshot weighting results in a median error reduction of two orders of magnitude for the same reduced basis size. In contrast, with weighting and reduced basis derivatives, we observe a consistent reduction between four and five orders of magnitude relative to the global POD basis.

\begin{figure}[tbhp]
	\centering
	\begin{subfigure}[t]{0.38\textwidth}
		\centering
		\includegraphics[height=4.8cm]{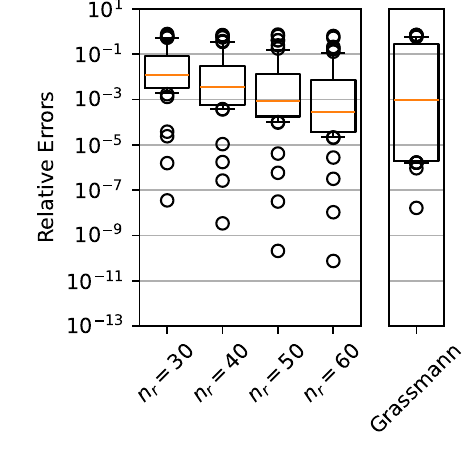}
		\caption{Global and Grassmann}
		\label{fig:errors_global_basis_followingFOM}
	\end{subfigure}
	\hfill
	\begin{subfigure}[t]{0.3\textwidth}
		\centering
		\includegraphics[height=4.8cm]{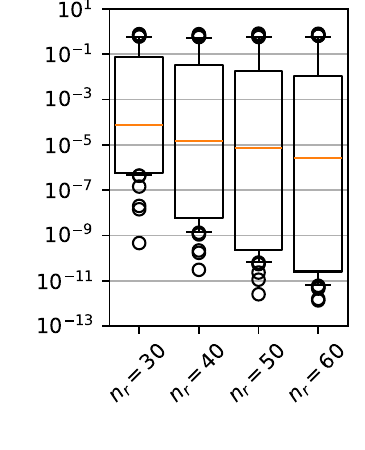}
		\caption{Weighted}
		\label{fig:errors_weighted_basis_followingFOM}
	\end{subfigure}
	\hfill
	\begin{subfigure}[t]{0.3\textwidth}
		\centering
		\includegraphics[height=4.8cm]{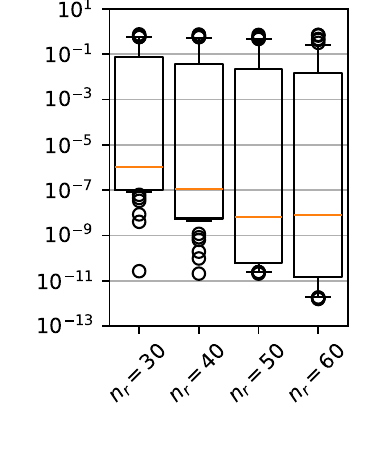}
		\caption{Weighted + derivatives}
		\label{fig:errors_sensitivities_basis_followingFOM}
	\end{subfigure}
	\caption{Relative $\mathbb{W}_2^1(I,\mathbb{X})$ norm error box plots of each POD approach when testing the POD approaches every time the HDM is queried. Each POD approach is tested at the same set of parameter values, with the same HDM solution data.}
	\label{fig:error_boxplots_followingFOM}
\end{figure}

\Cref{fig:walltime_boxplots_followingFOM} shows the total wall times per model evaluation corresponding to the error box plots of \cref{fig:error_boxplots_followingFOM}. This includes the time to update the SVD of the snapshot matrix if necessary, and all steps needed to construct the reduced bases, as well as the simulation wall time. The wall time numbers reported in this work are those of individual test runs; repeated testing showed negligible wall time variations. The black horizontal lines shown in each graph indicate the average simulation time of the HDM. The Grassmann manifold approach has the shortest wall time, owing to its lack of a global snapshot matrix and associated SVD calculation. As the reduced basis sizes of the other POD approaches are increased, their wall times increase accordingly. The wall times of the global POD approach and the weighted approach without derivatives are comparable, despite the latter computing the SVD of the weighted snapshot matrix for each set of snapshots weights. This shows that this extra basis adaptation step can be carried out efficiently. The median wall times of the weighted approach with snapshot derivatives are similar to those of the global approach and weighted approach without derivatives for $n_r \leq 50$, and are lower with $n_r = 60$. The reduced basis has an effect on the number of Newton iterations that are required to obtain a suitably converged solution. This is reflected in the wall time spread of the weighted approach with derivatives, as well as the low wall time outliers of the weighted approach without derivatives. The main conclusion that can be drawn from \cref{fig:walltime_boxplots_followingFOM} is that the extra reduced basis computation steps do not result in prohibitively expensive pre-processing computation times.

\begin{figure}[tbhp]
	\centering
	\begin{subfigure}[t]{0.38\textwidth}
		\centering
		\includegraphics[height=4.8cm]{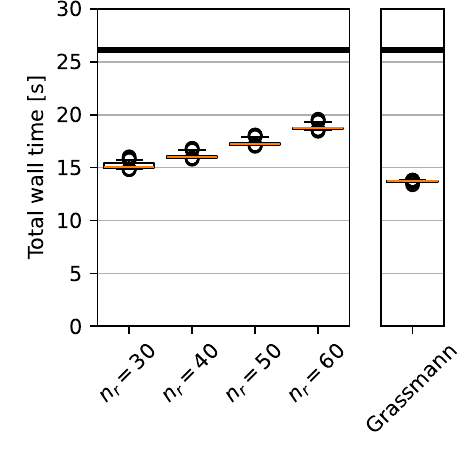}
		\caption{Global and Grassmann}
	\end{subfigure}
	\hfill
	\begin{subfigure}[t]{0.3\textwidth}
		\centering
		\includegraphics[height=4.8cm]{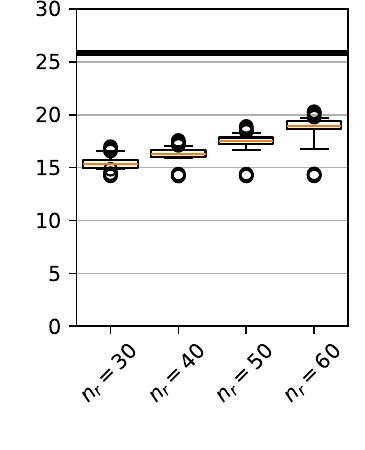}
		\caption{Weighted}
	\end{subfigure}
	\hfill
	\begin{subfigure}[t]{0.3\textwidth}
		\centering
		\includegraphics[height=4.8cm]{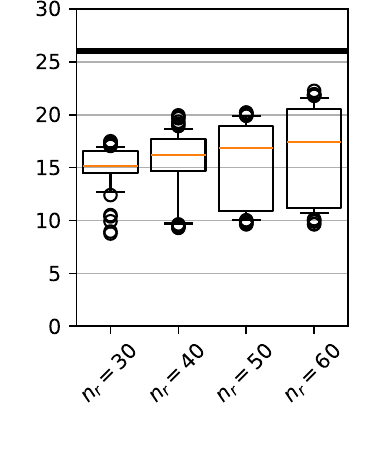}
		\caption{Weighted + derivatives}
		\label{fig:walltime_sensitivities_followingFOM}
	\end{subfigure}
	\caption{Wall time box plots of each POD approach when testing the POD approaches every time the HDM is queried. Each POD approach is tested at the same set of parameter values, with the same HDM solution data.}
	\label{fig:walltime_boxplots_followingFOM}
\end{figure}

\Cref{fig:medianerrors_vs_walltimes_followingFOM} shows the median wall times per model evaluation graphed against the median relative errors for the data sets shown in \cref{fig:error_boxplots_followingFOM} and \cref{fig:walltime_boxplots_followingFOM}. The vertical black line indicates the average HDM wall time. We observe that the global POD approach is outperformed by all three other methods, either by being less accurate or having a higher wall time for the same level of accuracy. The Grassmann manifold approach is the fastest of the methods considered, but is also several orders of magnitude less accurate than the weighted approaches with and without derivative information. Moreover, the weighted approach with derivative information outperforms the weighted approach without derivative information: Including derivative information reduces the relative error by at least an order of magnitude when keeping the median wall time constant.

\begin{figure}
	\centering
	\includegraphics[width=0.95\linewidth]{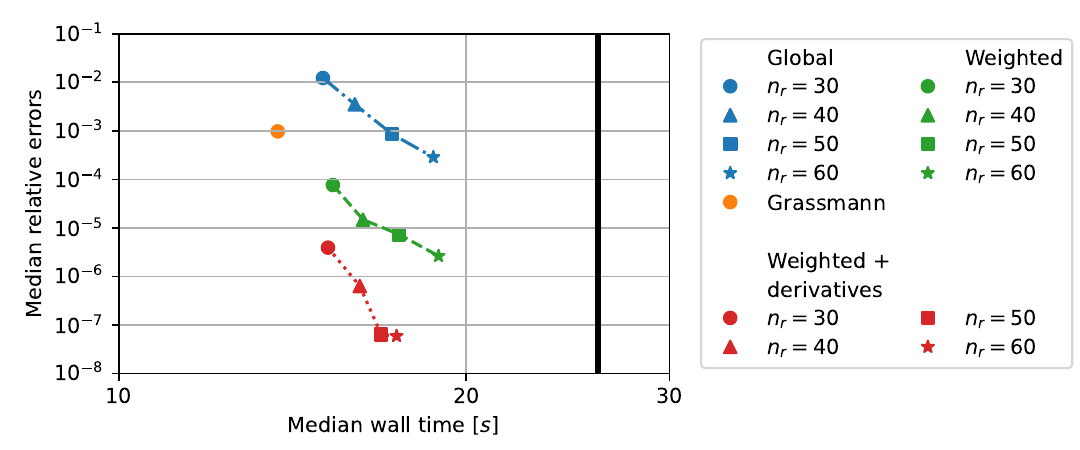}
	\caption{Median wall times versus median relative errors per POD evaluation when testing the POD approaches each time the HDM is queried.}
	\label{fig:medianerrors_vs_walltimes_followingFOM}
\end{figure}

\subsection{Optimization with the POD approaches}
\label{subsec:POD_testing}
In the second set of numerical experiments, we let the optimization algorithm make use of the POD solutions. We split the optimization procedure into multiple steps: Every $25$ SLSQP iterations, we re-initialize the optimization from the best-performing design, thereby resetting the Hessian approximation. At each re-initialization, we also reduce $\epsilon_{\bm{r}}$ to half of its previous value, thereby increasing the required accuracy of the POD-reduced model, and add a solution snapshot of the aforementioned best-performing design to the collection of snapshots if not included already. This approach allows us to accept more inaccurate POD solutions during the initial stages of the optimization process while still obtaining accurate approximate solutions near any optimal design, thereby trading off design space exploration and exploitation. To ensure sufficient POD accuracy near the optimal design point, we require $\epsilon_{\bm{r}}$ to be halved at least two times; in other words, $\epsilon_{\bm{r}}$ is to be decreased to at most one-fourths of its original value, and potentially less if more SLSQP iterations are needed to find an optimal design. We emphasize that each POD approach is used in separate runs of the same optimization problem: Based on the predictions of the POD models the optimization algorithm takes different paths through the parameter space, which results in different local optimal designs and required numbers of design iterations.

\begin{table}[tbhp]
	\footnotesize
	\caption{Number of (state) snapshots $n_s$, derivative snapshots $n_{s,d}$, successful (accepted) POD evaluations at unique query points $n_{a}$, model evaluations at unique query points $n_{e}$, and relative optimized compliance $C_{\text{rel}} = C_{\text{optimized}}/C_{\text{initial}}$ with various basis sizes $n_r$.}
	\label{tab:outputs}
	\subfloat[Global]{
		\begin{tabular}{ p{0.5cm}|p{0.5cm}p{0.5cm}p{0.5cm}p{0.85cm} }
			$n_r$ & $n_s$ & $n_{a}$ & $n_{e}$ & $C_{\text{rel}}$ \\ \hline
			$30$ & 181 & 2 & 183 & 0.2129 \\
			$40$ & 97 & 4 & 100 & 0.1972 \\
			$50$ & 119 & 14 & 132 & 0.2270 \\
			$60$ & 159 & 4 & 163 & 0.2194 \\
	\end{tabular}}
	\hfill
	\subfloat[Grassmann manifold]{\begin{tabular}{ p{0.8cm}|p{0.5cm}p{0.5cm}p{0.5cm}p{0.85cm} }
			$\kappa$ & $n_s$ & $n_{a}$ & $n_{e}$ & $C_{\text{rel}}$  \\ \hline
			$0.1$ & 124 & 97 & 220 & 0.2011  \\
	\end{tabular}}
	
	\subfloat[Weighted]{\begin{tabular}{ p{0.5cm}|p{0.5cm}p{0.5cm}p{0.5cm}p{0.85cm} }
			$n_r$ & $n_s$ & $n_{a}$ & $n_{e}$ & $C_{\text{rel}}$ \\ \hline
			$30$ & 87  & 28 & 113 & 0.1979 \\
			$40$ & 49 & 143 & 189 & 0.1961 \\
			$50$ & 39 & 36 & 73 & 0.1961 \\
			$60$ & 50 & 53 & 101 & 0.1960 \\
	\end{tabular}}
	\hfill
	\subfloat[Weighted + derivatives]{\begin{tabular}{ p{0.5cm}|p{0.5cm}p{0.6cm}p{0.5cm}p{0.5cm}p{0.85cm} }
			$n_r$ & $n_s$ & $n_{s,d}$ & $n_{a}$ & $n_{e}$ & $C_{\text{rel}}$ \\ \hline
			$30$ & 167 & 73 & 98 & 262 & 0.1978 \\
			$40$ & 67 & 24 & 60 & 126 & 0.1974 \\
			$50$ & 55 & 26 & 56 & 109 & 0.2083 \\
			$60$ & 30 & 14 & 29 & 57 & 0.1978 \\
	\end{tabular}}
	
\end{table}

\Cref{tab:outputs} summarizes the optimization results obtained with each method while using the same reduced basis sizes as in \cref{subsec:baseline_accuracytest}. For most test runs the number of snapshots plus the number of accepted POD evaluations exceeds the number of model evaluations at unique query points. This happens when a previous best-performing design is queried twice: At the initial query the POD solution is accepted, but the second query happens at the start of a re-initialized optimization run, where we require a snapshot to be saved. We note that the optimization runs with each method converge to a variety of local optima. The global POD basis consistently meets the convergence threshold in only a few iterations, as evidenced by the low number $n_a$ of accepted POD solutions, thereby providing nearly no speedup compared to only using the HDM. While the Grassmann manifold interpolation approach fares better, it still requires a high number of snapshots $n_s$. This likely happens due to errors when approximating solution gradients, which then cause the optimization algorithm to need more iterations to converge to a local optimal design. With weighted POD without derivative information we consistently find that fewer solution snapshots are needed than for global POD and the Grassmann manifold optimization run. Including reduced basis derivatives in the POD basis leads to an increase in the number of required HDM evaluations over the weighted approach without derivatives for most considered $n_r$. This happens because we are only able to make use of the solution snapshots that have a corresponding derivative snapshot; thus the number of available snapshots for constructing the reduced basis is equal to $n_{s,d}$, and not $n_s$. The difference between $n_s$ and $n_{s,d}$ suggests that the weighted approach with derivatives could benefit from using another optimization algorithm to lower $n_s$ further, such as a trust-region-based algorithm instead of SLSQP's line search-based approach. We furthermore see that the total number of model evaluations $n_e$ for the weighted approach with derivatives decreases as the reduced basis dimension is increased.

\begin{figure}[tbhp]
	\centering
	\begin{subfigure}[t]{0.38\textwidth}
		\centering
		\includegraphics[height=4.8cm]{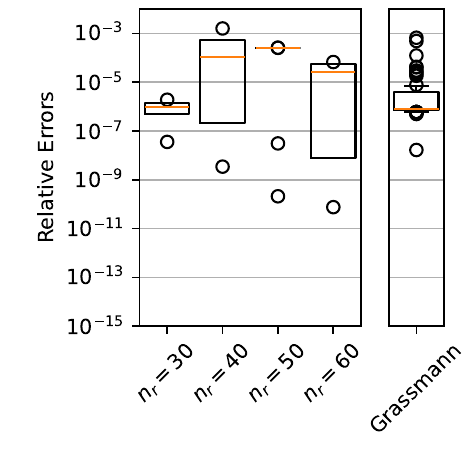}
		\caption{Global and Grassmann}
		\label{fig:errors_global_basis}
	\end{subfigure}
	\hfill
	\begin{subfigure}[t]{0.3\textwidth}
		\centering
		\includegraphics[height=4.8cm]{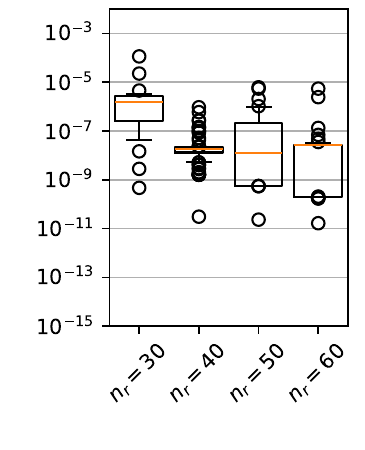}
		\caption{Weighted}
		\label{fig:errors_weighted_basis}
	\end{subfigure}
	\hfill
	\begin{subfigure}[t]{0.3\textwidth}
		\centering
		\includegraphics[height=4.8cm]{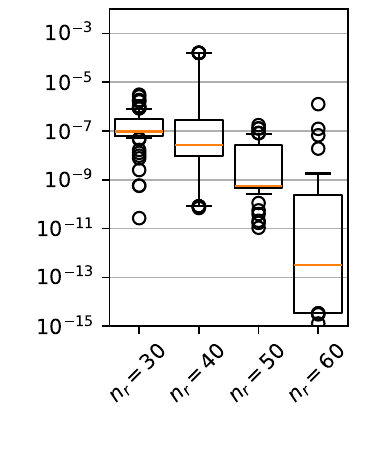}
		\caption{Weighted + derivatives}
		\label{fig:errors_sensitivities_basis}
	\end{subfigure}
	\caption{Relative $\mathbb{W}_2^1(I,\mathbb{X})$ norm error box plots of each POD approach for POD solutions accepted based on the residual norm tolerance $\epsilon_{\bm{r}}$.}
	\label{fig:error_boxplots}
\end{figure}

\begin{figure}[tbhp]
	\centering
	\begin{subfigure}[t]{0.38\textwidth}
		\centering
		\includegraphics[height=4.8cm]{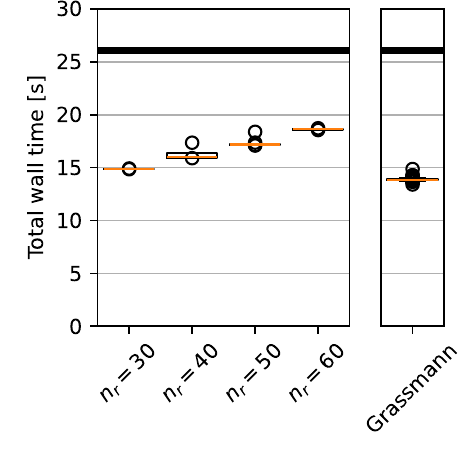}
		\caption{Global and Grassmann}
	\end{subfigure}
	\hfill
	\begin{subfigure}[t]{0.3\textwidth}
		\centering
		\includegraphics[height=4.8cm]{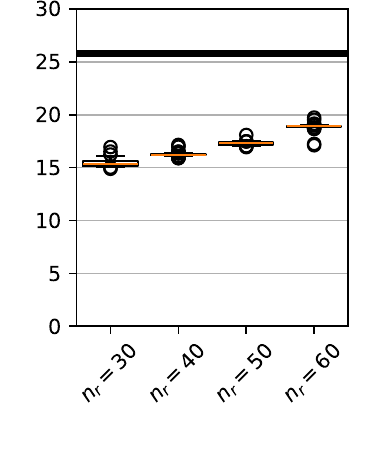}
		\caption{Weighted}
	\end{subfigure}
	\hfill
	\begin{subfigure}[t]{0.3\textwidth}
		\centering
		\includegraphics[height=4.8cm]{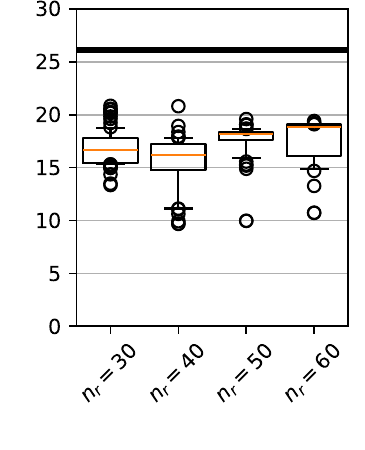}
		\caption{Weighted + derivatives}
		\label{fig:walltime_sensitivities}
	\end{subfigure}
	\caption{Wall time box plots of each POD approach for POD solutions accepted based on the residual norm tolerance $\epsilon_{\bm{r}}$.}
	\label{fig:walltime_boxplots}
\end{figure}

\Cref{fig:error_boxplots} shows box plots of the relative error norms of every POD solution that satisfies the residual criterion $\epsilon_{\bm{r}}$. We emphasize again that each optimization run follows its own trajectory through the parameter space with its own set of snapshot data, depending on which previous POD solutions did not satisfy the residual criterion. As mentioned above, the global POD basis has only a handful of accepted POD evaluations, leading to only a handful of error data points in the box plot, without any visible error convergence when increasing the size of the reduced basis. The Grassmann manifold-based approach has a median relative error around $10^{-6}$. This is comparable to the weighted approach with $n_r = 30$. When using weighted POD without derivatives, we see a consistent decrease in the median error between $n_r = 30$ and larger reduced bases. We also see that the weighted POD approach is again multiple orders of magnitude more accurate than both the global POD approach and the Grassmann manifold interpolation approach, especially for $n_r > 30$. This highlights an advantage of the weighted approach over the Grassman manifold-based approach for creating adaptive reduced bases: We are able to increase $n_r$ beyond the size of any local reduced basis, whereas with the Grassmann manifold-based approach we are limited to $n_r \leq 15$. Next, we look at the weighted POD approach with reduced basis derivatives. We see a decrease in the median error as the reduced basis size is increased. For $n_r < 50$ the accuracy of this method seems to be comparable to the weighted POD approach without reduced basis derivatives, while using fewer solution snapshots. With larger reduced basis sizes the median error keeps decreasing, even while the number of required snapshots with derivative information $n_{s,d}$ and the required number of model queries decrease accordingly. 

Lastly, \cref{fig:walltime_boxplots} shows the total wall time of each POD solution that meets the residual convergence threshold. We point out that these wall times are representative of the actual deployment of each POD approach. The trends shown here are identical to those of \cref{fig:walltime_boxplots_followingFOM}, where the wall times of the weighted approach without derivatives are similar to those of the global approach for all $n_r$. The Grassmann manifold-based approach is slightly faster than the other methods. The weighted approach with derivatives shows more fluctuation in wall time, with the median wall times being comparable to the aforementioned methods. This is despite having the smallest approximation errors of all methods considered here.

\section{Conclusions}
We presented methods for using adaptive, weighted proper orthogonal decomposition (POD) for high-dimensional optimization. We proposed here an efficient algorithm for computing local reduced bases for any new query point in the parameter space, by weighting the columns of the snapshot matrix. We prove an upper bound for the POD approximation error for time-dependent parametric models, and show how weighting the columns of the snapshot matrix affects this error bound. Choosing these snapshot weights based on their proximity to the current query point in the parameter space is shown to tighten the error bound, leading to a heuristic for weighted POD. We furthermore generalize the approach introduced in \cite{bib:Hay2009} to compute derivatives of the reduced basis in multidimensional parameter spaces, and show how snapshot weighting is a natural generalization of this approach whenever snapshot data is available for multiple points in the parameter space.

The methods presented are applied to a design optimization problem with $105$ design parameters and a linear time-dependent model with $200$ time steps and $1{,}422$ spatial degrees of freedom. Weighted POD approaches with and without derivative information are shown to be at least two orders of magnitude more accurate when using the same snapshot data as a global POD basis and a reference implementation of the Grassmann manifold interpolation approach. The computational cost of the weighted POD approach is shown in a numerical experiment to be comparable to that of using a global POD basis. While the online cost of computing reduced basis derivatives can be prohibitive in the presence of large amounts of data, the high levels of accuracy of this method allow for the use of sparse high-dimensional data, thereby limiting its computational overhead.

There are several ways in which these results could be improved upon. First, in this work, we used the Euclidean norm to measure distances between points in the parameter space. A more appropriate distance measure should be more sensitive to design parameters that lead to larger changes in the system state; this is also reflected in the POD error bound. One way of constructing such a distance measure could be with the active subspace method \cite{bib:Constantine2015}. Second, to obtain significant speedups a hyper-reduction approach should be incorporated in the presented methods. The discrete empirical interpolation method \cite{bib:Chaturantabut2009} and energy-conserving sampling and weighting \cite{bib:Farhat2015} can both make use of the weighted POD approach presented here. Alternatively, non-intrusive model learning approaches like operator inference \cite{bib:Peherstorfer2016} and dynamic mode decomposition \cite{bib:Schmid2010} also use POD for dimension reduction, and could thus make use of the weighted POD approaches presented here. Third, we used a simple way of embedding reduced models into an optimization algorithm by re-initializing the optimization after every $25$ SLSQP iterations, while also tightening the POD residual convergence threshold. Using a trust-region optimization approach like in \cite{bib:Wen2023, bib:Zahr2015} could lead to more effective and efficient use of the POD models. Alternatively, a full-space or hybrid optimization approach \cite{bib:Joshy2023, bib:Joshy2021} could be used; this could allow for more efficient use of reduced models by making the residual convergence threshold adaptive without having to re-initialize the optimization.

\bibliographystyle{siamplain}
\bibliography{sample}

\begin{thebibliography}{10}

\bibitem{bib:Amsallem2008}
{\sc D.~Amsallem and C.~Farhat}, {\em {Interpolation Method for Adapting
  Reduced-Order Models and Application to Aeroelasticity}}, AIAA Journal, 46
  (2008), pp.~1803--1813, \url{https://doi.org/10.2514/1.35374}.

\bibitem{bib:Amsallem2015}
{\sc D.~Amsallem, M.~Zahr, Y.~Choi, and C.~Farhat}, {\em Design optimization
  using hyper-reduced-order models}, Structural and Multidisciplinary
  Optimization, 51 (2015), pp.~919--940,
  \url{https://doi.org/10.1007/s00158-014-1183-y}.

\bibitem{bib:Amsallem2012}
{\sc D.~Amsallem, M.~J. Zahr, and C.~Farhat}, {\em Nonlinear model order
  reduction based on local reduced-order bases}, International Journal for
  Numerical Methods in Engineering, 92 (2012), pp.~891--916,
  \url{https://doi.org/10.1002/nme.4371}.

\bibitem{bib:Antoulas2005}
{\sc A.~C. Antoulas}, {\em {Approximation of Large-Scale Dynamical Systems}},
  Society for Industrial and Applied Mathematics, Philadelphia, PA, United
  States of America, 2005, \url{https://doi.org/10.1137/1.9780898718713}.

\bibitem{bib:Baratta2023}
{\sc I.~A. Baratta, J.~P. Dean, J.~S. Dokken, M.~Habera, J.~S. Hale, C.~N.
  Richardson, M.~E. Rognes, M.~W. Scroggs, N.~Sime, and G.~N. Wells}, {\em
  {DOLFINx}: the next generation {FEniCS} problem solving environment}.
\newblock preprint, 2023, \url{https://doi.org/10.5281/zenodo.10447666}.

\bibitem{bib:Benner2015}
{\sc P.~Benner, S.~Gugercin, and K.~Willcox}, {\em {A Survey of
  Projection-Based Model Reduction Methods for Parametric Dynamical Systems}},
  SIAM Review, 57 (2015), pp.~483--531,
  \url{https://doi.org/10.1137/130932715}.

\bibitem{bib:Berkooz1993}
{\sc G.~Berkooz, P.~Holmes, and J.~L. Lumley}, {\em The proper orthogonal
  decomposition in the analysis of turbulent flows}, Annual Review of Fluid
  Mechanics, 25 (1993), pp.~539--575,
  \url{https://doi.org/10.1146/annurev.fl.25.010193.002543}.

\bibitem{bib:Bialecki2003}
{\sc R.~A. Bia\l{}ecki, A.~J. Kassab, and Z.~Ostrowski}, {\em {Application of
  the Proper Orthogonal Decomposition in Steady State Inverse Problems}}, in
  Inverse Problems in Engineering Mechanics IV, M.~Tanaka, ed., Elsevier
  Science B.V., Amsterdam, 2003, pp.~3--12,
  \url{https://doi.org/10.1016/B978-008044268-6/50005-3}.

\bibitem{bib:Bistrian2016}
{\sc D.~Bistrian and R.~Susan-Resiga}, {\em Weighted proper orthogonal
  decomposition of the swirling flow exiting the hydraulic turbine runner},
  Applied Mathematical Modelling, 40 (2016), pp.~4057--4078,
  \url{https://doi.org/10.1016/j.apm.2015.11.015}.

\bibitem{bib:Brand2003}
{\sc M.~Brand}, {\em {Fast online SVD revisions for lightweight recommender
  systems}}, in Proceedings of the 2003 SIAM International Conference on Data
  Mining (SDM), 2003, pp.~37--46,
  \url{https://doi.org/10.1137/1.9781611972733.4}.

\bibitem{bib:Brenner2008}
{\sc S.~C. Brenner and L.~R. Scott}, {\em The Mathematical Theory of Finite
  Element Methods}, Texts in Applied Mathematics, Springer New York, third~ed.,
  2008, \url{https://doi.org/10.1007/978-0-387-75934-0}.

\bibitem{bib:BuiThanh2003}
{\sc T.~Bui-Thanh, M.~Damodaran, and K.~Willcox}, {\em {Proper Orthogonal
  Decomposition Extensions for Parametric Applications in Compressible
  Aerodynamics}}, in 21st AIAA Applied Aerodynamics Conference, 2003,
  \url{https://doi.org/10.2514/6.2003-4213}.

\bibitem{bib:Carlberg2008}
{\sc K.~Carlberg and C.~Farhat}, {\em {A Compact Proper Orthogonal
  Decomposition Basis for Optimization-Oriented Reduced-Order Models}}, in 12th
  AIAA/ISSMO Multidisciplinary Analysis and Optimization Conference, 2008,
  \url{https://doi.org/10.2514/6.2008-5964}.

\bibitem{bib:Carlberg2009}
{\sc K.~Carlberg and C.~Farhat}, {\em {An Adaptive {POD}-{K}rylov Reduced-Order
  Model for Structural Optimization}}, 8th World Congress on Structural and
  Multidisciplinary Optimization, Lisbon, Portugal,  (2009).

\bibitem{bib:Carlberg2011}
{\sc K.~Carlberg and C.~Farhat}, {\em A low-cost, goal-oriented ‘compact
  proper orthogonal decomposition’ basis for model reduction of static
  systems}, International Journal for Numerical Methods in Engineering, 86
  (2011), pp.~381--402, \url{https://doi.org/10.1002/nme.3074}.

\bibitem{bib:Chaturantabut2009}
{\sc S.~Chaturantabut and D.~C. Sorensen}, {\em {Discrete Empirical
  Interpolation for nonlinear model reduction}}, in Proceedings of the 48h IEEE
  Conference on Decision and Control (CDC) held jointly with 2009 28th Chinese
  Control Conference, 2009, pp.~4316--4321,
  \url{https://doi.org/10.1109/CDC.2009.5400045}.

\bibitem{bib:Chen2017}
{\sc P.~Chen, A.~Quarteroni, and G.~Rozza}, {\em {Reduced Basis Methods for
  Uncertainty Quantification}}, SIAM/ASA Journal on Uncertainty Quantification,
  5 (2017), pp.~813--869, \url{https://doi.org/10.1137/151004550}.

\bibitem{bib:Constantine2015}
{\sc P.~G. Constantine}, {\em {Active Subspaces}}, Society for Industrial and
  Applied Mathematics, Philadelphia, PA, 2015,
  \url{https://doi.org/10.1137/1.9781611973860}.

\bibitem{bib:Farhat2015}
{\sc C.~Farhat, T.~Chapman, and P.~Avery}, {\em Structure-preserving,
  stability, and accuracy properties of the energy-conserving sampling and
  weighting method for the hyper reduction of nonlinear finite element dynamic
  models}, International Journal for Numerical Methods in Engineering, 102
  (2015), pp.~1077--1110, \url{https://doi.org/10.1002/nme.4820}.

\bibitem{bib:Feng2024}
{\sc L.~Feng, S.~Chellappa, and P.~Benner}, {\em A posteriori error estimation
  for model order reduction of parametric systems}, Advanced Modeling and
  Simulation in Engineering Sciences, 11 (2024),
  \url{https://doi.org/10.1186/s40323-024-00260-8}.

\bibitem{bib:Golub}
{\sc G.~H. Golub and C.~F. Van~Loan}, {\em Matrix Computations}, The Johns
  Hopkins University Press, fourth~ed., 2013,
  \url{https://doi.org/10.56021/9781421407944}.

\bibitem{bib:Goutaudier2023}
{\sc D.~Goutaudier, F.~Nobile, and J.~Schiffmann}, {\em {A new method to
  interpolate POD reduced bases–Application to the parametric model order
  reduction of a gas bearings supported rotor}}, International Journal for
  Numerical Methods in Engineering, 124 (2023), pp.~4141--4170,
  \url{https://doi.org/10.1002/nme.7305}.

\bibitem{bib:Gray2019}
{\sc J.~S. Gray, J.~T. Hwang, J.~R. R.~A. Martins, K.~T. Moore, and B.~A.
  Naylor}, {\em {OpenMDAO}: An open-source framework for multidisciplinary
  design, analysis, and optimization}, Structural and Multidisciplinary
  Optimization, 59 (2019), pp.~1075--1104,
  \url{https://doi.org/10.1007/s00158-019-02211-z}.

\bibitem{bib:Gubisch2017}
{\sc M.~Gubisch and S.~Volkwein}, {\em {Chapter 1: Proper Orthogonal
  Decomposition for Linear-Quadratic Optimal Control}}, Society for Industrial
  and Applied Mathematics, 2017, pp.~3--63,
  \url{https://doi.org/10.1137/1.9781611974829.ch1}.

\bibitem{bib:Hay2009}
{\sc A.~Hay, J.~T. Borggaard, and D.~Pelletier}, {\em Local improvements to
  reduced-order models using sensitivity analysis of the proper orthogonal
  decomposition}, Journal of Fluid Mechanics, 629 (2009), p.~41–72,
  \url{https://doi.org/10.1017/S0022112009006363}.

\bibitem{bib:Heinkenschloss2020}
{\sc M.~Heinkenschloss, B.~Kramer, and T.~Takhtaganov}, {\em Adaptive
  reduced-order model construction for conditional value-at-risk estimation},
  SIAM/ASA Journal on Uncertainty Quantification, 8 (2020), pp.~668--692,
  \url{https://doi.org/10.1137/19M1257433}.

\bibitem{bib:Iliescu2014}
{\sc T.~Iliescu and Z.~Wang}, {\em {Are the Snapshot Difference Quotients
  Needed in the Proper Orthogonal Decomposition?}}, SIAM Journal on Scientific
  Computing, 36 (2014), pp.~A1221--A1250,
  \url{https://doi.org/10.1137/130925141}.

\bibitem{bib:Iwen2016}
{\sc M.~A. Iwen and B.~W. Ong}, {\em {A Distributed and Incremental SVD
  Algorithm for Agglomerative Data Analysis on Large Networks}}, SIAM Journal
  on Matrix Analysis and Applications, 37 (2016), pp.~1699--1718,
  \url{https://doi.org/10.1137/16M1058467}.

\bibitem{bib:Jarvis2014}
{\sc C.~H. Jarvis}, {\em Parameter Dependent Model Reduction for Complex Fluid
  Flows}, PhD thesis, Virginia Polytechnic Institute and State University,
  2014.

\bibitem{bib:Joshy2023}
{\sc A.~J. Joshy, R.~Dunn, M.~Sperry, V.~E. Gandarillas, and J.~T. Hwang}, {\em
  {An SQP algorithm based on a hybrid architecture for accelerating
  optimization of large-scale systems}}, in AIAA AVIATION 2023 Forum, 2023,
  \url{https://doi.org/10.2514/6.2023-4263}.

\bibitem{bib:Joshy2021}
{\sc A.~J. Joshy and J.~T. Hwang}, {\em {Unifying Monolithic Architectures for
  Large-Scale System Design Optimization}}, AIAA Journal, 59 (2021),
  pp.~1953--1963, \url{https://doi.org/10.2514/1.J059954}.

\bibitem{bib:Koc2021}
{\sc B.~Koc, S.~Rubino, M.~Schneier, J.~Singler, and T.~Iliescu}, {\em {On
  Optimal Pointwise in Time Error Bounds and Difference Quotients for the
  Proper Orthogonal Decomposition}}, SIAM Journal on Numerical Analysis, 59
  (2021), pp.~2163--2196, \url{https://doi.org/10.1137/20M1371798}.

\bibitem{bib:Kraft1988}
{\sc D.~Kraft}, {\em A software package for sequential quadratic programming},
  tech. report, DLR German Aerospace Center – Institute for Flight Mechanics,
  1988.
\newblock Tech. Rep. DFVLR-FB 88-28.

\bibitem{bib:Kunisch2003}
{\sc K.~Kunisch and S.~Volkwein}, {\em {Galerkin Proper Orthogonal
  Decomposition Methods for a General Equation in Fluid Dynamics}}, SIAM
  Journal on Numerical Analysis, 40 (2003), pp.~492--515,
  \url{https://doi.org/10.1137/S0036142900382612}.

\bibitem{bib:LeGresley2000}
{\sc P.~LeGresley and J.~Alonso}, {\em Airfoil design optimization using
  reduced order models based on proper orthogonal decomposition}, in Fluids
  2000 Conference and Exhibit, 2000, \url{https://doi.org/10.2514/6.2000-2545}.

\bibitem{bib:Lu2019}
{\sc K.~Lu, Y.~Jin, Y.~Chen, Y.~Yang, L.~Hou, Z.~Zhang, Z.~Li, and C.~Fu}, {\em
  Review for order reduction based on proper orthogonal decomposition and
  outlooks of applications in mechanical systems}, Mechanical Systems and
  Signal Processing, 123 (2019), pp.~264--297,
  \url{https://doi.org/10.1016/j.ymssp.2019.01.018}.

\bibitem{bib:Ly2001}
{\sc H.~V. Ly and H.~T. Tran}, {\em Modeling and control of physical processes
  using proper orthogonal decomposition}, Mathematical and Computer Modelling,
  33 (2001), pp.~223--236, \url{https://doi.org/10.1016/S0895-7177(00)00240-5}.
\newblock Computation and control VI proceedings of the sixth Bozeman
  conference.

\bibitem{bib:Martins2013}
{\sc J.~R. R.~A. Martins and J.~T. Hwang}, {\em {Review and Unification of
  Methods for Computing Derivatives of Multidisciplinary Computational
  Models}}, AIAA Journal, 51 (2013), pp.~2582--2599,
  \url{https://doi.org/10.2514/1.J052184}.

\bibitem{bib:Martins2013b}
{\sc J.~R. R.~A. Martins and A.~B. Lambe}, {\em {Multidisciplinary Design
  Optimization: A Survey of Architectures}}, AIAA Journal, 51 (2013),
  pp.~2049--2075, \url{https://doi.org/10.2514/1.J051895}.

\bibitem{bib:Peherstorfer2016}
{\sc B.~Peherstorfer and K.~Willcox}, {\em Data-driven operator inference for
  nonintrusive projection-based model reduction}, Computer Methods in Applied
  Mechanics and Engineering, 306 (2016), pp.~196--215,
  \url{https://doi.org/10.1016/j.cma.2016.03.025}.

\bibitem{bib:Peng2016}
{\sc L.~Peng and K.~Mohseni}, {\em {Nonlinear model reduction via a locally
  weighted POD method}}, International Journal for Numerical Methods in
  Engineering, 106 (2016), pp.~372--396,
  \url{https://doi.org/10.1002/nme.5124}.

\bibitem{bib:Ravindran2000}
{\sc S.~S. Ravindran}, {\em A reduced-order approach for optimal control of
  fluids using proper orthogonal decomposition}, International Journal for
  Numerical Methods in Fluids, 34 (2000), pp.~425--448,
  \url{https://doi.org/10.1002/1097-0363(20001115)34:5<425::AID-FLD67>3.0.CO;2-W}.

\bibitem{bib:Sahyoun2013}
{\sc S.~Sahyoun and S.~Djouadi}, {\em {Local Proper Orthogonal Decomposition
  based on space vectors clustering}}, in 3rd International Conference on
  Systems and Control, 2013, pp.~665--670,
  \url{https://doi.org/10.1109/ICoSC.2013.6750930}.

\bibitem{bib:Sarojini2024}
{\sc D.~Sarojini, M.~Ruh, J.~Yan, L.~Scotzniovsky, N.~C. Orndorff, R.~Xiang,
  H.~Zhao, J.~Krokowski, M.~Warner, S.~van Schie, A.~Cronk, A.~T. Guibert,
  J.~T. Chambers, L.~Wolfe, R.~Doring, R.~Despins, C.~Joseph, R.~Anderson,
  A.~Ning, H.~Gill, S.~Lee, Z.~Cheng, Z.~Cao, C.~Mi, Y.~S. Meng, C.~Silva,
  J.-S. Chen, A.~A. Kim, and J.~T. Hwang}, {\em {Review of computational models
  for large-scale MDAO of urban air mobility concepts}}, in AIAA SCITECH 2024
  Forum, 2024, \url{https://doi.org/10.2514/6.2024-0377}.

\bibitem{bib:Saves2024}
{\sc P.~Saves, R.~Lafage, N.~Bartoli, Y.~Diouane, J.~Bussemaker, T.~Lefebvre,
  J.~T. Hwang, J.~Morlier, and J.~R. R.~A. Martins}, {\em {{SMT 2.0: A}
  Surrogate Modeling Toolbox with a focus on Hierarchical and Mixed Variables
  Gaussian Processes}}, Advances in Engineering Sofware, 188 (2024), p.~103571,
  \url{https://doi.org/https://doi.org/10.1016/j.advengsoft.2023.103571}.

\bibitem{bib:Schmid2010}
{\sc P.~J. Schmid}, {\em Dynamic mode decomposition of numerical and
  experimental data}, Journal of Fluid Mechanics, 656 (2010), p.~5–28,
  \url{https://doi.org/10.1017/S0022112010001217}.

\bibitem{bib:Schmidt2013}
{\sc A.~Schmidt, A.~Potschka, S.~K\"{o}rkel, and H.~G. Bock}, {\em
  {Derivative-Extended POD Reduced-Order Modeling for Parameter Estimation}},
  SIAM Journal on Scientific Computing, 35 (2013), pp.~A2696--A2717,
  \url{https://doi.org/10.1137/120896694}.

\bibitem{bib:Trefethen}
{\sc L.~N. Trefethen and D.~Bau, III}, {\em Numerical Linear Algebra}, Society
  for Industrial and Applied Mathematics, 1997.

\bibitem{bib:vanSchie2023}
{\sc S.~P. van Schie and J.~T. Hwang}, {\em {Model Reduction of Isogeometric
  Shell Structures For Large-Scale Design Optimization}}, in AIAA AVIATION 2023
  Forum, 2023, \url{https://doi.org/10.2514/6.2023-3720}.

\bibitem{bib:Vasudevan2019}
{\sc V.~Vasudevan and M.~Ramakrishna}, {\em {A Hierarchical Singular Value
  Decomposition Algorithm for Low Rank Matrices}}, 2019,
  \url{https://arxiv.org/abs/1710.02812}.

\bibitem{bib:Venturi2019}
{\sc L.~Venturi, F.~Ballarin, and G.~Rozza}, {\em {A Weighted POD Method for
  Elliptic PDEs with Random Inputs}}, Journal of Scientific Computing, 81
  (2019), pp.~136--153, \url{https://doi.org/10.1007/s10915-018-0830-7}.

\bibitem{bib:Wen2023}
{\sc T.~Wen and M.~J. Zahr}, {\em A globally convergent method to accelerate
  large-scale optimization using on-the-fly model hyperreduction: Application
  to shape optimization}, Journal of Computational Physics, 484 (2023),
  p.~112082, \url{https://doi.org/10.1016/j.jcp.2023.112082}.

\bibitem{bib:Xiang2024}
{\sc R.~Xiang, S.~P.~C. van Schie, L.~Scotzniovsky, J.~Yan, D.~Kamensky, and
  J.~T. Hwang}, {\em Automating adjoint sensitivity analysis for
  multidisciplinary models involving partial differential equations},
  Structural and Multidisciplinary Optimization, 67 (2024),
  \url{https://doi.org/10.1007/s00158-024-03847-2}.

\bibitem{bib:Zahr2015}
{\sc M.~J. Zahr and C.~Farhat}, {\em {Progressive construction of a parametric
  reduced-order model for PDE-constrained optimization}}, International Journal
  for Numerical Methods in Engineering, 102 (2015), pp.~1111--1135,
  \url{https://doi.org/10.1002/nme.4770}.

\end{thebibliography}
\end{document}